\documentclass[12pt, twoside]{article}
\usepackage[english]{babel}
\usepackage[T1]{fontenc}
\usepackage[utf8]{inputenc}
\usepackage{lmodern}
\usepackage[top=2.5cm,bottom=2.5cm,right=2.5cm,left=2cm]{geometry}
 \usepackage{amsmath, amssymb} 
 \usepackage{amsthm}
 \usepackage{graphicx}
 \usepackage{color}

 \usepackage{lmodern}

\usepackage{etoolbox}

\usepackage{hyperref}

\hypersetup{	
     colorlinks=true,       
      breaklinks=true,         
      urlcolor= magenta,       
      linkcolor=blue ,	
      citecolor=blue,	
                       }

\slshape

\newtheorem{theorem}{Theorem}[section]
\newtheorem{lemma}{Lemma}[section]
\newtheorem{corollary}{Corollary}[section]
\newtheorem{proposition}{Proposition}[section]
\newtheorem{remark}{Remark}[section]


\newtheorem*{lemma A1}{Lemma A1}
\newtheorem*{lemma A2}{Lemma A2}
\newtheorem*{k lemma}{Key Lemma}
\newcommand{\C}{\mathbb C}
\newcommand{\R}{\mathbb R}
\newcommand{\Z}{\mathbb Z}

\newcommand{\N}{\mathbb N}
\numberwithin{equation}{section}

\begin{document}

\title{\(L^2\)-Poisson integral representations of eigensections of invariant differential operators on a homogeneous line bundle over the complex Grassmann manifold \(SU(r,r+b)/S( U(r)\times U(r+b))\).}

\author{
Abdelhamid Boussejra \thanks{e-mail: boussejra.abdelhamid@uit.ac.ma}
Noureddine Imesmad \thanks{e-mail:noureddine.imesmad@uit.ac.ma}
Achraf Ouald Chaib\thanks{e-mail:achraf.oualdchaib@uit.ac.ma} \\
\begin{small}
Department of Mathematics,  Faculty of Sciences
\end{small}\\
\begin{small}
University Ibn Tofail, Kenitra, Morocco
\end{small}}

\maketitle

\begin{abstract}
 Let  \(E_l=G\times_K{\C}\) be the associated  homogeneous line bundle to a one dimensional \(K\)-representation  \(\tau_l\) (\(l\in {\Z}\)) over the noncompact complex Grassman manifold \(G/K\);  \(G=SU(r,r+b)\) and \(K=S(U(r)\times U(r+b))\). Let \(\mathbb{D}(E_l)\) be the algebra of \(G\)-invariant differential operators on \(E_l\). Let \(\lambda\) be a real and regular spectral parameter   in \(\mathfrak{a}^\ast\),  and let \(F\) be a solution of the system differential equations on \(E_l\): \(DF=\chi_{\lambda,l}(D)F\) for all \(D\) in \(\mathbb{D}(E_l)\). In this article we obtain  a necessary and sufficient condition for this \(F\) to be represented by the Poisson transform of \(f\) in the  section space \(L^2(K\times_M {\C})\). 
\end{abstract}

Keywords: Strichartz conjecture, Poisson transform, Fourier restriction estimate, asymptotic expansion for the Poisson transform

\section{Introduction and main result}
Let \(X=G/K\) be a Hermitian symmetric space. Fix an Iwasawa decomposition \(G=KAN\) and let \(P=MAN\) be the usual minimal parabolic subgroup associated to it.\\
Let \(\tau_l\) (\(l\in {\Z}\)) be a character of \(K\) and \(G\times_K{\C}\) the associated homogeneous line bundle on \(G/K\). As usual we identify its section space \(\Gamma^\infty(G\times_K{\C})\) with the space \(C^\infty(G,\tau_l)\) of smooth \({\C}\)-valued functions \(F\) on \(G\) satisfying  
\begin{align}\label{cov}
F(gk)=\tau_l(k)^{-1}F(g)\quad \textit{for all}\quad  g\in G, k\in K.
\end{align}
The Poisson transform \(\displaystyle P_{\lambda,l}\) for spectral parameter \(\displaystyle\lambda\in \mathfrak{a}^\ast_c\) ( the complex dual of \(\mathfrak{a}\)) is the linear map \(\displaystyle P_{\lambda,l}:\mathcal{B}(K\times_M {\C})\rightarrow C^\infty(G,\tau_l)\) from the space of hyperfunction valued sections of the homogeneous line bundle \(K\times_M {\C}\) to \(C^\infty(G,\tau_l)\), given by
\begin{align*}
P_{\lambda,l}f(g)=\int_K {\rm e}^{-(i\lambda+\rho)H(g^{-1}k)}\tau_l(\kappa(g^{-1}k))f(k)\,{\rm d}k,
\end{align*}
where \(\kappa\) and \(H\) are the Iwasawa projections  of \(G\) in \(K\) and \(\mathfrak{a}\) respectively, (\(\mathfrak{a}\) being the Lie algebra of \(A\)).\\
Let \(\mathbb{D}(E_l)\) be  the algebra of left-invariant differential operators on \(\displaystyle C^\infty(G,\tau_l)\) (which is commutative). In \cite{Sh1} Shimeno showed that the Poisson transform  
\(P_{\lambda,l}\) is an isomorphism from \(\mathcal{B}(K\times_M {\C})\) onto \(\mathcal{E}_{\lambda,l}(G)\) the solution space of the system of differential equations
\begin{align*}
DF=\chi_{\lambda,l}(D)F, \quad D\in \mathbb{D}(E_l),
\end{align*}
under certain conditions on the parameter $\lambda$, see section 2 below for more details.\\
In the light of this result, it is natural to look for a characterization of those \(F\) in \(\mathcal{E}_{\lambda,l}(G)\) for which the boundary value \(f\) is an \(L^2\)-section of the homogeneous line bundle \(K\times_M {\C}\) for real and spectral parameter \(\lambda\in \mathfrak{a}_{reg}^{*}\). This amount to characterize the image of the Poisson transform \(P_{\lambda,l}\) of \(L^2(K\times_M {\C})\) for \(\lambda\in \mathfrak{a}_{reg}^{*}\).\\
Before stating the main result of this paper, we should mention to the reader that in the case \(\tau_l=1\), the problem of characterizing the \(L^2\)-range of the Poisson transform goes back to Strichartz \cite{S} who proved that \(P_\lambda\) is a topological isomorphism from \(L^2(K/M)\) onto a weighted \(L^2\)-  eigenspace of the Laplacian  in the case \(X\) is the real hyperbolic space. In the same paper he conjectured that this result can be extended to Riemannian symmetric spaces of higher rank ( cf. [\cite{S}, Conjecture 4.5], for more details).\\
An image characterization of the Poisson transform of \(L^2\)-functions on the boundary was obtained in \cite{BI} for the complex hyperbolic space and, for all  Riemannian symmetric spaces of rank one  by Ionescu in \cite{I}  (see also \cite{BI1},\cite{BS}, \cite{BO}, \cite{K}, \cite{KR}).
Finally in  \cite{Ka} Kaizuka settled the Strichartz conjecture for general Riemannian symmetric spaces.\\ 
Our main concern in this article is to extend Kaizuka's result  to homogeneous line bundles over $G/K$ when \(G=SU(r,r+b)\) and \(K=S(U(r)\times U(r+b))\).\\
More precisely, we identify the section space \(L^2(K\times_M{\C})\) with the space  \(L^2(K,\tau_l)\) consisting of all \({\C}\)-valued measurable (classes) functions \(f\) on \(K\)  satisfying \(f(km)=\tau_l(m)^{-1}f(k)\), for all \(k\in K, m\in M\), with
\begin{align*}
\parallel f\parallel_2=\left(\int_K\mid f(k\mid^2\, {\rm d}k\right)^{\frac{1}{2}}<\infty.
\end{align*}
Let \(L^2(G,\tau_l)\) be the space of all functions \(F:G\rightarrow {\C}\) that are \(L^2\) with respect to the Haar measure on \(G\) and satisfy the identity (\ref{cov}). It is obvious (from the unitarity of \(\tau_l\))  that \(\mid F(g)\mid\) is constant on cosets, so 
\begin{align}
\parallel F\parallel^2=\int_{G} \mid F(g)\mid^2\, {\rm d}g=\int_{G/K} \mid F(g)\mid^2\, {\rm d}g_K.
\end{align}
For \(\lambda\in \mathfrak{a}_{reg}^{*}\) we define \(\mathcal{E}^2_{\lambda,l}(G)\) to be the space of all functions \(F\in \mathcal{E}_{\lambda,l}(G)\) that satisfy
\[
\left\|F\right\|_*=\sup_{R>1}\dfrac{1}{R^{r/2}}\left(\int_{B(R)}|F(g)|^{2}dg_K\right)^\frac{1}{2}<\infty.
\]
Here \(B(R)\) is the open ball of radius \(R\) about the origin \(0=eK\),  \(d\) being the distance function on \(G/K\).\\
The main result we prove in this paper is the following:
\begin{theorem}\label{main R}
Let \(l\in {\Z}\) and \(\lambda\in\mathfrak{a}^\ast_\textit{reg}\).
\item[(i)]There exists a positive constant \(C>0\) depending only on \(l\) such that for every\\ \(f\) in \(L^2(K,\tau_l)
\) we have  following estimates
\begin{equation}\label{estimate0}
C^{-1}\mid c(\lambda,l)\mid \left\|f\right\|_2\leq \left\|P_{\lambda,l}f\right\|_*\leq C\mid  c(\lambda,l)\mid \left\|f\right\|_2,
\end{equation}
Furthermore we have
\begin{equation}\label{estimate00}
\lim_{R\rightarrow +\infty}\dfrac{1}{R^{r}}\int_{B(R)}|P_{\lambda,l}f(g)|^{2}{\rm d}g_K=2^{-r/2}\Gamma(r/2+1)^{-1}\mid c(\lambda,l)\mid^2\left\|f\right\|_2^2,
\end{equation}
where \(c(\lambda,l)\) is the Harish-Chandra \(c\)-function associated with \(\tau_l\) and \(\Gamma\) the usual Gamma function.
\item[(ii)] \(P_{\lambda,l}\) is a topological isomorphism from \(L^2(K,\tau_l)\) onto \(\mathcal{E}^2_{\lambda,l}(G)\).
\item[(iii)] The \(L^2\)-boundary \(f\) of the eigenfunction \(F\in \mathcal{E}^2_{\lambda,l}(G)\) is given by
\[
f(k)=2^{r/2}\Gamma(r/2+1)\mid c(\lambda,l)\mid^{-2}\lim_{R\rightarrow+\infty}
\frac{1}{R^r}\int_{B(R)}{\rm e}^{(i\lambda-\rho)H(g^{-1}k)} \tau_{-l}(\kappa(g^{-1}k))F(g){\rm d}g_K
\]
in \(L^2(K,\tau_l)\).
\end{theorem}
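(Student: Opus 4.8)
The plan is to follow the classical Strichartz–Kaizuka strategy, adapted to the line‑bundle setting via the Shimeno isomorphism (Theorem~\cite{Sh1}) and the explicit structure of $SU(r,r+b)$. The backbone is an asymptotic analysis of $P_{\lambda,l}f$ as one moves to infinity along $A^+$, combined with a polar‑coordinate integration over $B(R)$.

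\emph{Step 1: Reduction to $K$-types and the $\tau_l$-spherical function.} Since $\lambda\in\mathfrak a^\ast_{\mathrm{reg}}$, by Shimeno's theorem $P_{\lambda,l}$ is injective on $\mathcal B(K,\tau_l)$, so it suffices to produce the two‑sided estimate \eqref{estimate0}, the limit \eqref{estimate00}, and surjectivity onto $\mathcal E^2_{\lambda,l}(G)$. Expanding $f\in L^2(K,\tau_l)$ in $K$-types and using the $\tau_l$-radial part of $P_{\lambda,l}$, the matrix coefficient $\langle P_{\lambda,l}f,\cdot\rangle$ along $A^+$ is governed by the elementary $\tau_l$-spherical function $\varphi_{\lambda,l}$, whose Harish-Chandra expansion $\varphi_{\lambda,l}(\exp H)=\sum_{w\in W} c(w\lambda,l)\,e^{(iw\lambda-\rho)(H)}(1+o(1))$ holds on $\mathfrak a^+$ with the $W$-invariant $c$-function $c(\lambda,l)$ appearing as leading coefficient; this is where the hypothesis $e_l(\lambda)\neq0$ (regularity) is used to guarantee that all $c(w\lambda,l)$ are finite and the expansion is genuinely oscillatory with $2^r|W|$-fold leading terms.

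\emph{Step 2: Pointwise asymptotics of $P_{\lambda,l}f$.} Writing $g=k_1(\exp H)k_2$ in polar coordinates, one shows
\[
e^{\rho(H)}P_{\lambda,l}f(k_1\exp H)=\sum_{w\in W} e^{iw\lambda(H)}\, c(w\lambda,l)\,(A_w f)(k_1)+o(1),\qquad H\to\infty\ \text{in}\ \mathfrak a^+,
\]
where $A_w$ is (up to the $w$-action) essentially the Fourier component attached to $w$; the off-diagonal cross terms oscillate and contribute nothing after averaging. Dominated convergence plus a uniform majorant (obtained from the Harish-Chandra estimate $|\varphi_{\lambda,l}|\le \varphi_{0,l}$ and $\varphi_{0,l}(\exp H)\asymp e^{-\rho(H)}(1+|H|)^{d}$) upgrade this to an $L^2(K\times A^+_R)$ statement.

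\emph{Step 3: Integration over $B(R)$ and the constant $\gamma_r$.} Using $dg = \delta(H)\,dk_1\,dH\,dk_2$ with $\delta(H)=\prod_{\alpha>0}(\sinh\alpha(H))^{m_\alpha}\sim c\,e^{2\rho(H)}$ and the fact that $d(0,g\cdot0)=|H|$, the integral $\frac1{R^r}\int_{B(R)}|P_{\lambda,l}f|^2\,dg$ becomes, after inserting Step~2 and killing oscillatory cross terms by Riemann–Lebesgue, asymptotically
\[
|c(\lambda,l)|^2\|f\|_2^2\cdot\frac1{R^r}\int_{\{|H|<R\}\cap\mathfrak a^+}\!\!\!\!|W|\,c\,dH\ \longrightarrow\ \gamma_r^2\,|c(\lambda,l)|^2\|f\|_2^2,
\]
the constant $\gamma_r^2=2^{-r/2}\Gamma(r/2+1)^{-1}$ emerging from the volume of the Euclidean ball of radius $R$ in $\mathfrak a\cong\R^r$ together with the normalization of $dH$; this gives \eqref{estimate00}, and the sup‑norm estimate \eqref{estimate0} follows because the $R^{-r/2}$-normalized integrals are uniformly bounded above (Herz‑type majorization for $R$ bounded) and bounded below by their limit. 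Then $P_{\lambda,l}$ is a topological injection with closed range, so (ii) reduces to surjectivity.

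\emph{Step 4: Surjectivity and the inversion formula.} Given $F\in\mathcal E^2_{\lambda,l}(G)$, the boundedness $\|F\|_*<\infty$ forces, via the same polar decomposition and the asymptotics of solutions of the eigen-system (the Harish-Chandra/Oshima–Sekiguchi expansion of members of $\mathcal E_{\lambda,l}(G)$), a leading term $e^{(iw\lambda-\rho)(H)}f_w(k_1)$ with each $f_w\in L^2(K,\tau_l)$; regularity of $\lambda$ makes the boundary data consistent, i.e. $f_w = c(w\lambda,l)\,(\text{Fourier component of a single } f)$, and the candidate preimage is recovered by the averaging formula in (iii), namely
\[
f(k)=|c(\lambda,l)|^{-2}\gamma_r^{-2}\lim_{R\to\infty}\frac1{R^r}\int_{B(R)}e^{(i\lambda-\rho)H(g^{-1}k)}\tau_{-l}(\kappa(g^{-1}k))F(g)\,dg,
\]
with convergence in $L^2(K,\tau_l)$ justified by pairing against the continuous family $P_{\lambda,l}(\cdot,k)$ and using Step~3 applied to $F - P_{\lambda,l}f$, which has vanishing $\|\cdot\|_*$-limit and hence is zero by injectivity. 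This yields $F=P_{\lambda,l}f$ and simultaneously proves (iii).

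\emph{Main obstacle.} The delicate point is Step~2–3: controlling the error terms in the Harish-Chandra expansion of $\varphi_{\lambda,l}$ \emph{uniformly} in $H\in\mathfrak a^+$ up to the walls, so that the passage from pointwise asymptotics to the $L^2(B(R))$ limit is legitimate, and ensuring that the oscillatory cross terms $e^{i(w\lambda-w'\lambda)(H)}$ for $w\neq w'$ integrate to $o(R^r)$ — this is exactly where $\lambda_i\neq0$ and $\lambda_i\pm\lambda_j\neq0$ enter, guaranteeing $w\lambda\neq w'\lambda$ so that stationary‑phase/Riemann–Lebesgue cancellation applies. The line‑bundle twist $\tau_l$ affects only the $c$-function (replacing $c(\lambda)$ by $c(\lambda,l)$) and the $K$-type bookkeeping, not the geometry, so the complex Grassmannian structure is used mainly to make the root data and $\delta(H)$ explicit.
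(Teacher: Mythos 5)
Your Steps 2--3 contain a genuine gap, and it is precisely the one this paper is organized to avoid. To get the upper bound in (\ref{estimate0}) for \emph{every} $f\in L^2(K,\tau_l)$, and to pass from a dense class of boundary data to all of $L^2$ in the asymptotic expansion, you need a bound on $P_{\lambda,l}$ that is uniform over all $K$-types, i.e.\ uniform estimates for the Eisenstein integrals $\Phi^l_{\lambda,\delta}$ for every $\delta\in\widehat{K}_l$. The majorant you invoke, $|\varphi_{\lambda,l}|\le\varphi_{0,l}$ with $\varphi_{0,l}(\exp H)\asymp e^{-\rho(H)}(1+|H|)^d$, controls only the single $\tau_{-l}$-spherical function and says nothing about the other $K$-types; such uniform estimates for generalized spherical functions are exactly what is not known (already in rank one they are tied to Strichartz's Conjecture 5.4 on Jacobi functions). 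The paper circumvents this by a different device: the Fourier restriction theorem (Proposition \ref{uniform}), proved via the support theorem for the Radon transform (Lemma \ref{Radon}) and Anker's trick of replacing $\mathbf{c}(\lambda,l)^{-1}$ by a comparable multiplier $\widetilde{\mathbf{c}}(\lambda,l)^{-1}$ whose inverse Fourier transform has compact support; the upper bound in (\ref{estimate0}) then follows by duality (Corollary \ref{cor1}), and it is this uniform bound that licenses the density step in the asymptotic expansion (Theorem \ref{asympt0}). Your claimed \emph{pointwise} asymptotics of $P_{\lambda,l}f$ for general $L^2$ data is likewise unavailable: the correct statement is an asymptotic in the $B^{*}_{r,l}$-norm, proved first for finite combinations of Poisson kernels $f^{\lambda}_{g_0}$ (dense because a regular real $\lambda$ is simple), with the intertwiners $U^l_{s,\lambda}$, and then extended by density using the restriction estimate --- the very estimate your argument lacks, so the extension step in your scheme is circular.

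A second, related gap: the uniform control of the error in the Harish-Chandra expansion of $\varphi_{\lambda,l}$ up to the Weyl walls, which you flag as ``the delicate point'' but do not supply. Regularity of $\lambda$ only gives the Riemann--Lebesgue cancellation of the cross terms (which you use correctly); it does not give the wall-uniform error bound. In the paper this is the Key Lemma, obtained from the explicit determinant formula for $\varphi_{\lambda,l}$ in terms of Jacobi functions (Proposition \ref{spherical function}) together with an induction on the rank using Lagrange interpolation (Lemma \ref{L}). Finally, in Step 4 you propose to extract $L^2$ boundary data directly from the growth condition via an Oshima--Sekiguchi-type expansion of eigenfunctions; the paper instead takes the hyperfunction boundary value furnished by Shimeno's theorem and proves it lies in $L^2$ through the $K$-type expansion, Schur orthogonality, and the limit formula applied to the Eisenstein integrals, and the inversion formula in (iii) is then obtained from the operators $I_{\lambda,\delta,R}$ and dominated convergence --- machinery your sketch would still have to build.
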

This  extends  Theorem 3.3 of Kaizuka \cite{Ka}, to the homogeneous line bundles over the bounded symmetric domains \(SU(r,r+b)/S(U(r)\times U(r+b))\).\\
\textbf{Consequence.}\\
For any \(\lambda\in \mathfrak{a}^\ast\) we define  the space
\(\mathcal{H}^2_{\lambda,l}(G)\) consisting of all \(F\in \mathcal{E}_{\lambda,l}(G)\) such that
 \[M_2(F)=\left(\limsup_{R\rightarrow \infty}\frac{1}{R^r}\int_{B(R)}\mid F(g)\mid^2\, {\rm d}\,g_K\right)^{\frac{1}{2}}.
\]
Then as an immediate consequence of Theorem \ref{main R} we get the following result which is closely similar to O. Bray conjecture in the case of the real hyperbolic space (see \cite{Br}).
\begin{corollary}\label{cor2}
If \(\lambda\in\mathfrak{a}^\ast_\textit{reg}\) then \(\mathcal{H}^2_{\lambda,l}(G)\) is a Banach space.
\end{corollary}
\begin{remark}
\begin{itemize}
\item[(i)] For \(l=0\) and \(r=1\) the above corollary  was proved by Ionescu \cite{I}.
\item[(ii)] For \(l=0\) Kaizuka \cite{Ka} generalized corollary \ref{cor2} to all Riemannian symmetric spaces.
\end{itemize}
\end{remark}
We now describe in more details the contents of the paper.\\
Our method of proving  Theorem \ref{main R} follows the one used by Kaizuka \cite{Ka} in  the trivial case. The right hand side of the estimate (\ref{estimate0})  follows from the following uniform continuity estimate for the Helgason-Fourier restriction operator :  There exists a constant \(C>0\) such that  for \(\lambda\in \mathfrak{a}^\ast_{\textit{reg}}\), we have \[(\int_K \mid \widetilde{F}_l(\lambda,k)\mid^2{\rm d}k)^{\frac{1}{2}}\leq C \mid c(\lambda,l)\mid R^{r/2}(\int_G \mid F(g)\mid^2{\rm d}g_K)^{\frac{1}{2}},\] for \(R>1\) and \(F\in L^2(G,\tau_l)\) with \( supp F\subset B(R)\), (see Proposition \ref{uniform} in section 4).\\
On the other hand, the left hand side of (\ref{estimate0}) is a direct consequence of the estimate (\ref{estimate00}).\\
We should mention to the reader that the major difficulty in proving Theorem \ref{main R} is to establish the estimate (\ref{estimate00}).\\ 
Let us briefly recall the method used by Kaizuka to prove the estimate (\ref{estimate00}) in the trivial case (\(l=0)\). It is based on an asymptotic formula for the Poisson transform.  This leads to establish a uniform estimate for the elementary spherical function \(\varphi_\lambda\) on the positive Weyl chamber: For any \(\lambda\in \mathfrak{a}^\ast_{\textit{reg}}\), there exists a positive constant \(C_\lambda\) such that for any \(H\in \overline{\mathfrak{a}^+}\) we have  
\begin{equation}\label{sph1}
\mid \varphi_\lambda({\rm e}^H)-\sum_{s\in W}c(s\lambda){\rm e}^{(is\lambda-\rho)H}\mid\leq C_\lambda{\rm e}^{-(\rho+\tau)(H)},
\end{equation}
The key to prove  (\ref{sph1}) is to use the uniform estimate for \(\varphi_\lambda\) due to Harish-Chandra \cite{H}: There exists a positive constant \(C\) and a non negative integer \(d\) such that for \(\lambda\in \mathfrak{a}^\ast_{\textit{reg}}\) and \(H\in \overline{a^+}\) we have 
\begin{equation}\label{sph2}
\mid \pi(\lambda)\varphi_\lambda({\rm e}^H)\mid \leq C(1+\mid \lambda\mid^{2})^d{\rm e}^{-\rho(H)}.
\end{equation} 
We imitate this method. So we are led to establish  the analogous of the estimate (\ref{sph2}) in the line bundle case  for the elementary spherical function of type \(\tau_{-l}\) 
\begin{align*}
\phi_{\lambda,l}(g)=\int_K {\rm e}^{-(i\lambda+\rho)H(g^{-1}k)}\tau_l(\kappa(g^{-1}k)k^{-1})\,{\rm d}k
\end{align*}
The elementary \(\tau_{-l}\)-spherical function \(\varphi_{\lambda,l}\)  can be expressed by the Heckman-Opdam's hypergeometric function \(F(i\lambda,k(l),a)\), see \cite{HS}. More precisely we have  \[\displaystyle \varphi_{\lambda,l}(a)=\prod_{j=1}^r(\frac{{\rm e}^{t_j}+{\rm e}^{-t_j}}{2})^{-l}F(i\lambda,k(l),a), \quad a=\exp H.\]
Then the desired uniform estimate may be read as follows: There exists a positive constant \(C\) and a non-negative integer \(d\)  such that for \(\lambda\in \mathfrak{a}^\ast\) and \(a\in \overline{A}^+\) we have
\[\mid \pi(\lambda) F(i\lambda,k(l),a)\mid \leq C(1+\mid \lambda\mid^2)^d{\rm e}^{-\rho(l)(H)},\quad a=\exp H
,\]
which is to our knowledge not yet established.\\
In our case the Heckman-Opdam hypegeometric functions \(F(i\lambda,k(l),a)\) are  given explicitly by a determinant of a class Jacobi functions. This allows us to establish the uniform estimate for \(F(i\lambda,k(l),a)\) (The key Lemma) from which we deduce an asymptotic formula for the Poisson transform. The last section is devoted to the proof of (ii) and (iii).
\section{Notation and Preliminaries}
Let ${\C}^{2r+b}$ be endowed with a hermitian form $\displaystyle J(z,w)=\sum_{j=1}^{r} z_j\overline{w_j}- \sum_{j=r+1}^{2r+b} z_j\overline{w_j}$. Let \(G\) be the group all complex \((2r+b)\times(2r+b)\) matrices with determinant \(1\) keeping invariant  the hermitian form $J$.\\
Let $Gr_r({\C}^{2r+b})$ denote the complex Grassmann manifold of all complex $r$-planes in ${\C}^{2r+b}$. The natural action of $G$ on ${\C}^{2r+b}$ gives rise to an action of $G$ on
$$
X_{r,r+b}=\{L\in Gr_r({\C}^{2r+b}); J_{\mid L}\quad \textit{is positive definite}\}.
$$  
In fact $G$ acts transitively on $X_{r,r+b}$ and as a homogeneous space we have the identification $G/K=X_{r,r+b}$, where $K$ the isotropy subgroup of $E_0={\C}^r\times\{0\}$ is a maximal compact subgroup of $G$. Explicitly $ \displaystyle K=S(U(r)\times U(r+b))$ consisting of pairs \((A,D)\) of unitaries with \(\det(AD)=1\).

The Lie algebra  \(\mathfrak{g}\) of $G$   has the Cartan decomposition \(\displaystyle \mathfrak{g}=\mathfrak{k}\oplus \mathfrak{p}\),  where  \(\mathfrak{k}\) is the Lie algebra of $K$, orthogonal with respect to the Killing form \(B:\mathfrak{g}\times\mathfrak{g}\rightarrow {\R} \) given by \(B(X,Y)=2(2r+b)Tr(XY)\). The vector space  \(\mathfrak{p}\) consists of block matrices of the form 
\[
\begin{pmatrix}
0_{r\times r}&Z\\
Z^\ast&0_{r+b\times r+b}
\end{pmatrix}, \quad Z \quad \textit{complex} \, \, r\times(r+b) \,\,  \textit{matrix},
\]
Let \(\mathfrak{a}\) be  the subspace of \(\mathfrak{p}\) consisting of the matrices \(H_T\) of the form 
\begin{equation*}
H_T=\begin{pmatrix}
0_{r\times r}&\textit{diag}\,T &0_{r\times b}\\
\textit{diag}\,T&0_{r\times r}&0_{r\times b}\\
0_{b\times r}& 0_{b\times r}&0_{b\times b}
\end{pmatrix},  t_j\in {\R},
\end{equation*}
where \(\textit{diag}\,T\) denotes the \(r\times r\) matrix with diagonal \(T=(t_1,...,t_r)\), \(t_j\in {\R}\). Then \(\mathfrak{a}\) is a maximal abelian subspace of \(\mathfrak{p}\). Correspondingly the group \(A\) consists of the matrices
\[
a_T=\begin{pmatrix}
diag(\cosh t_{1},\cdots,\cosh t_r)& diag(\sinh t_1,\cdots,\sinh t_r)&O_{r\times b}\\
diag(\sinh t_1,\cdots,\sinh t_r)&diag(\cosh t_{1},\cdots,\cosh t_r)&O_{r\times b}  \\
O_{b\times r}&O_{b\times r}& I_{b}
\end{pmatrix}
.\]
Define \(\alpha_j\in \mathfrak{a}^\ast\) by \(\alpha_j(H_T)=t_j\). The restricted root system \(\Sigma=\Sigma(\mathfrak{g},\mathfrak{a})\) of \(\mathfrak{g}\) in \(\mathfrak{a}\)  is of type  \(BC_r\) or \(C_r\) and consists of \(\pm 2\alpha_j, (1\leq j\leq r), \pm\alpha_i\pm\alpha_j, (1\leq i\neq j\leq r)\) and \(\pm \alpha_j, (1\leq j\leq r)\) with multiplicities \(1, 2\) and \(2b\) respectively
 (\(b\) may equals \(0\), in this case \(X\) is said to be of tube type). \\
We identify $\mathfrak{a}$ with ${\R}^r$ via $\displaystyle H_T\longmapsto T$.\\ We also identify $\mathfrak{a}^\ast$ with ${\R}^r$ via  $\lambda \longmapsto (\lambda_1,\cdots, \lambda_r)$ for $\lambda\in \mathfrak{a}^\ast_c$ given by  $\lambda=\displaystyle\sum_{j=1}^r\lambda_j\alpha_j$ . \\As usual we define the inner product  $<\, ,\, >$ on  $\mathfrak{a}^\ast$ by $\displaystyle <\lambda,\mu>=\frac{1}{4(2r+b)}\sum_{j=1}^r\lambda_j\mu_j$. For $\lambda\in\mathfrak{a}^\ast$ we put $\mid\lambda\mid=\sqrt{<\lambda,\lambda>}$.\\ 
The Weyl group  of $(\mathfrak{g},\mathfrak{a})$ is 
\(\displaystyle W=\{s\mid s(t_1,\cdots, t_r)=(\epsilon_1 t_{\sigma(1)},\cdots, \epsilon_r t_{\sigma(r)}); \sigma\in S_r, \epsilon_i=\pm 1\}\), i.e., \(W\) is the semidirect product of the permutation group \(S_r\), acting by \(t_j\rightarrow t_{\sigma (j)}\), and the group \(\{\pm 1\}^r\), acting by \(t_j\rightarrow \epsilon_jt_j\). We note that \(\mid W\mid =2^rr!\)\\ 
The subset \(\mathfrak{a}_{reg}\subset\mathfrak{a}\)  of regular elements is the complement of the union of the hyper-planes \(\{\beta=0\}, \beta\in \Sigma\). In our identification of $\mathfrak{a}$ with ${\R}^r$, \(\mathfrak{a}_{reg}\) corresponds to the set
\begin{equation*}
\{T=(t_1, \cdots, t_r)\in {\R}^r; t_j\neq 0, t_j\pm t_k\neq 0; \, 1\leq j\leq r, \, 1\leq j\neq k\leq r\}
\end{equation*}
We choose as  positive Weyl chamber \(\mathfrak{a}^+\)(i.e., a connected component of \(\mathfrak{a}_{reg}\) in \(\mathfrak{a}\))
\begin{equation*}
\mathfrak{a}^+=\{H_T\in \mathfrak{a}: t_1>t_2>\dots>t_r>0\},
\end{equation*}
so that the set of corresponding positive roots \(\Sigma^+\) is given by
\[
\Sigma^+=\{ \alpha_j, 2\alpha_j, \alpha_i\pm\alpha_j; \, (1\leq i<j\leq r), 1\leq j\leq r \}.\]
Then the weighted  half-sum of positive roots is \(\displaystyle\rho=\displaystyle\sum_{j=1}^r(1+b+2(r-j))\alpha_j\).\\
Put \(\displaystyle\beta_i=\alpha_i-\alpha_{i+1} (1\leq i\leq r-1)\) and \(\displaystyle\beta_r=\delta\alpha_r\). Then the set of positive simple roots is 
 \(\displaystyle\Psi=\{\beta_1, \cdots, \beta_r\}\),
with \(\delta=2\) in the tube case and \(\delta=1\) in the non tube case.\\
Let \(\displaystyle\mathfrak{n}=\sum_{\alpha\in\Sigma^+} \mathfrak{g}_\alpha\) and let \(N\) be the analytic subgroup of \(G\) corresponding to the subalgebra  \(\mathfrak{n}\). Then \(G=KAN\) is an Iwasawa decomposition of \(G\) and each \(g\in G\) can be uniquely written
\(g=\kappa(g){\rm e}^{H(g)}n(g)\), with  \(\kappa(g)\in K,  H(g)\in \mathfrak{a}\) and \( n(g)\in N\).\\
Let \(A^+=\exp(\mathfrak{a}^+)\), and \(\overline{A^+}\) the closure of \(A^+\) in \(G\). Then we have the polar  decomposition  \(G=K\overline{A^+}K\) of \(G\) and for each  \(g\in G\) there exists a unique element \(A^{+}(g)\in \overline{\mathfrak{a}^{+}}\) such that \(g\in K{\rm e}^{A^{+}(g)}K\).\\
In our identification of $\mathfrak{a}$ with ${\R}^r$, the closed chamber $\overline{\mathfrak{a}}^+$ corresponds to the set 
$$
\mathcal{C}=\{T=(t_1,\cdots, t_r)\in {\R}^r; t_1\geq t_2\geq\cdots\geq t_r\geq0\}.
$$
We shall use the letter \(C\) to denote any positive constant independent of the parameter \(\lambda\). Occasionally \(C\) may be suffixed to show it dependency on some parameters.
\subsection{Integral formulas}
The Lebesgue measure  on \(\mathfrak{a}\) and \(\mathfrak{a}^\ast\) is normalized  so that the Fourier transform 
\begin{equation*}
\mathcal{F}_{\mathfrak{a}}\phi(\lambda)=\int_{\mathfrak{a}}{\rm e}^{-i\lambda(H)} \phi(H){\rm d}H, \quad \phi\in \mathcal{S}(\mathfrak{a}), \lambda\in \mathfrak{a}^\ast
\end{equation*}
has as its inverse
\begin{equation*}
\phi(H)=\int_{\mathfrak{a}^\ast}{\rm e}^{i\lambda(H)} \mathcal{F}_{\mathfrak{a}}\phi(\lambda) {\rm d}\lambda.
\end{equation*}
In the above $\mathcal{S}(\mathfrak{a})$ denotes the Schwartz space on $\mathfrak{a}$.\\
Consider the function \(\Delta\)  on $A^+$ defined by 
\begin{equation*}
\Delta({\rm e}^{H_T})=\omega^{2}({\rm e}^{H_T})2^{r(2b+1)}\prod_{j=1}^r\sinh^{2b} t_j\sinh2t_j,
\end{equation*}
where \(\omega({\rm e}^{H_T})\)  is the Weyl denominator given by
\begin{equation*}
\omega({\rm e}^{H_T})=2^{\frac{r(r-1)}{2}}\prod_{1\leq j<k\leq r}(\cosh 2t_j-\cosh 2t_k).
\end{equation*}
We normalize the invariant measure \({\rm d}g_K\) by  
 \begin{equation*}
\int_{G/K} h(gK){\rm d}g_K=\int_G h(g.0)\, {\rm d}g=\int_K\int_{A^+}h(ka)\Delta(a)\,{\rm d}k\,{\rm d}a, \quad h\in C_c(G/K).
\end{equation*}
 \subsection{The Helgason-Fourier transform}
In this section we collect some known facts on the Helgason-Fourier transform on homogeneous line bundles over \(SU(r,r+b)/S(U(r)\times U(r+b))\) referring to \cite{Sh}, \cite{Ca} for more details.\\
Let \(Z\) be the basis of the center of \(\mathfrak{k}\) given by \(\displaystyle Z=\begin{pmatrix}
\frac{i}{r}I_r&0\\
0&-\frac{i}{r+b}I_{r+b}
\end{pmatrix}\).
Then the group \(\displaystyle K=SU(r)\times SU(r+b)\exp \mathbb{R}Z\) is a direct product so that any  character of \(K\) is trivial on \(SU(r)\times SU(r+b)\). Therefore every character of \(K\) is parameterized by \(l\in{\Z}\) and is given by \(\displaystyle \tau_l(k_s\exp tZ)={\rm e}^{ilt}\).\\
The  Helgason-Fourier transform of \(F\in C_c^\infty(G,\tau_l)\) is the function from $\mathfrak{a}^\ast_c\times K$  defined by
\begin{equation*}
 \widetilde{F}_l(\lambda,k)=\int_G {\rm e}^{(i\overline{\lambda}-\rho)H(g^{-1}k)}\tau_l^{-1}(\kappa(g^{-1}k)) F(g)\, {\rm d}g.
 \end{equation*}
Let  \(h_j\) \((0\leq j\leq r\)) be \(r+1\)-nonconjugate \(\Theta\)-stable Cartan subalgebras built from the following subsets 
\(\theta_j\)of \(\Psi\)
\[\theta_j=\{\beta_i; r-j+1\leq i\leq r\},\]
 and let \(P_j\) be the corresponding cuspidal parabolic subgroup with Langlands decomposition \(P_j=M_jA_jN_j\). Then the following inversion formula holds (see \cite{Sh})
\begin{align*}
 & F(x)=\frac{1}{\mid W \mid}\int_{\mathfrak{a}^\ast}\int_K {\rm e}^{-(i\lambda+\rho)H(x^{-1}k)}\tau_l(\kappa(x^{-1}k))\widetilde{F}_l(\lambda,k)\mid c(\lambda,l)\mid^{-2} {\rm d}k\,{\rm d}\lambda +\\
&\sum_{j=1}^{r-1}d_j\sum_{-\lambda^{''}\in D_{l,j}}\int_{\mathfrak{a}_j^\ast}\int_K{\rm e}^{(-(i\lambda'+\lambda^{''})-\rho)H(x^{-1}k)}\tau_l(\kappa(x^{-1}k))\widetilde{F}_l(\lambda^{'}-i\lambda^{''},k)\mid c^j(\lambda,l)\mid^{-2}{\rm d}k{\rm d}\lambda^{'}+\\
&\sum_{i\lambda\in D_{l,r}}d_r(i\lambda,l)\int_K{\rm e}^{(-(i\lambda{'}+\lambda^{''})-\rho)H(x^{-1}k)}\tau_l(\kappa(x^{-1}k))\widetilde{F}_l(\lambda^{'}-i\lambda^{''},k){\rm d}k,\qquad \qquad (\lambda=\lambda'+i\lambda")
\end{align*} 
and the Plancherel formula 
\begin{align}\label{P}
&\int_G\mid F(g)\mid^2{\rm d}g=\frac{1}{\mid W \mid}\int_{\mathfrak{a}^\ast}\int_K\mid \widetilde{F}_l(\lambda,k)\mid^2 \mid c(\lambda,l)\mid^{-2}{\rm d}\lambda{\rm d}k+ \\
&+\sum_{j=1}^{r-1} d_j\sum_{-\lambda^{''}\in D_{l,j}}\int_{\mathfrak{a}_j^\ast}\int_K \widetilde{F}_l(\lambda^{'}-i\lambda^{''},k)\overline{\widetilde{F}_l(\lambda^{'}+i\lambda^{''},k)}\mid c^j(\lambda,l)\mid^{-2}{\rm d}k{\rm d}\lambda^{'}\notag\\
&+\sum_{i\lambda\in D_{l,r}}d_r(i\lambda,l)\int_K\widetilde{F}_l(\lambda^{'}-i\lambda^{''},k)\overline{\widetilde{F}_l(\lambda^{'}+i\lambda^{''},k)}{\rm d}k\notag
\end{align}
Here \(\mid c^j(\lambda,l)\mid^{-2}{\rm d}\lambda^{'}\) is the Plancherel measure associated with the principal \(P_j\) series \(j=1,\cdots,r-1\), (the parameter \(\lambda\) in \cite{Sh} corresponds here to \(i\lambda\)).\\ In the above $d_j, d_r(i\lambda,r)$ are some constants and  $D_{\lambda,l}$ is the set of parameters  of the relative discrete series. 
The function  \(c(\lambda,l)\) is a meromorphic function on \(\mathfrak{a}^\ast_c\simeq {\C}^r\) given by 
\[c(\lambda,l)=\frac{B}{\prod_{1\leq i<j\leq r}\lambda_i^2-\lambda_j^2}\prod_{j=1}^r \dfrac{2^{-i\lambda_{j}}\Gamma(i\lambda_{j})}{\Gamma(\dfrac{1}{2}(b+1+i\lambda_{j}+l))\Gamma(\dfrac{1}{2}(b+1+i\lambda_{j}-l))},\] 
with \(\displaystyle B=(-1)^{r(r-1)/2}2^{r(2r-1+b))}(b!)^{r}\prod_{j=1}^{r-1}(b+j)^{r-j}j!\).\\
Now, for any \(F\in C_c^\infty(G,\tau_l)\), define its Radon transform \(\mathcal{R}F\) by 
\begin{equation*}
\mathcal{R}F(g)={\rm e}^{\rho(H(g))}\int_{N}F(gn){\rm d}n.
\end{equation*}
Then a simple calculation shows that the Fourier transform may be written as
\[ \widetilde{F}_l(\lambda,k)=\int_A {\rm e}^{(-i\lambda+\rho)(H)}{\rm d}H\int_N F(k{\rm e}^H n){\rm d}n.\]
Set \(\mathcal{R}F(H,k)=\mathcal{R}F(k{\rm e}^H)\).\\
For the Radon transform we have the following support theorem.\\
As is well known, the Killing form induces a norm \(\vert\, \vert\)   on \(\mathfrak{p}\) and a distance function \(d\) on \(G/K\). 
Moreover, if \(g\) is written in the form \(g=k{\rm e}^X\) with respect to the Cartan decomposition \(G=K\exp \mathfrak{p},\) \((k\in K, X\in \mathfrak{p})\), then \(\displaystyle d(0,g.0)=\mid X\mid\), where \(0=eK\).\\
Define the open ball centered at $0$ and of radius $R>0$ by \(B(R)=\{gK\in G/K; d(0,g.0)<R\}\). Let  \(\mathfrak{a}(R)=\{H\in\mathfrak{a}; \mid H\mid< R\}\)
\begin{lemma}\label{Radon}
Let \(\displaystyle F\in C_c^\infty(G,\tau_l)\).
\begin{enumerate}
\item[(i)] If \(\textit{supp}F\subset \overline{B(R)}\), then \(\textit{supp} \mathcal{R}F\subset \overline{\mathfrak{a}(R)}\times K\).
\item[(ii)] The Radon transform and the Fourier transforms are related by the following formula \[\widetilde{F}_l(\lambda,k)=\mathcal{F}_{a}[\mathcal{R}F(k, .)](\lambda).\]
\end{enumerate}
\end{lemma}
\begin{proof}
We have only to prove (i). We know that  \[d(0,{\rm e}^Hn.0)\geq \mid H\mid , \quad H\in \mathfrak{a}, n\in N,\] and since the distance function is \(K\)-invariant it follows that \(\displaystyle d(0,k{\rm e}^Hn.0)\geq \mid H\mid \) for all \(k\in K, H\in \mathfrak{a}, n\in N\).
Therefore \(\mathcal{R}F\) has support  in \(\overline{\mathfrak{a}(R)}\times K\) if \(f\) has support in \(\overline{B(R)}\).
\end{proof}
\subsection{The Poisson transform on line bundle}
Let \(M\) be the centralizer of \(A\) in \(K\). Then \(M\) is of the form \[ M=\Bigg\{\begin{pmatrix}
diag ({\rm e}^{i\theta_1},\cdots,{\rm e}^{i\theta_r})&0\\
0&diag ({\rm e}^{i\theta_1},\cdots,{\rm e}^{i\theta_r})
\end{pmatrix}; \theta_j\in {\R},\theta_1+\cdots \theta_r\in \pi{\Z}\Bigg\},\quad \textit{if} \quad b=0,\] 
and of the form \[ M=\Bigg\{\begin{pmatrix}
diag ({\rm e}^{i\theta_1},\cdots,{\rm e}^{i\theta_r})&0&0\\
0&diag ({\rm e}^{i\theta_1},\cdots,{\rm e}^{i\theta_r})&0\\
0&0&L
\end{pmatrix}; \theta_j\in {\R}, L\in U(b); {\rm e}^{i2(\theta_1+\cdots+\theta_r)}\det L=1\Bigg\},\]
if \(b\neq 0\).\\ 
Let \(G\times_P {\C}\) be  the homogeneous line bundle  associated with the character of \(P=MAN\) defined by \(man\rightarrow a^{\rho-i\lambda}\tau_l(m)\).
We identify the space of hyperfunction valued sections of \(G\times_P{\C}\) with the space
\(\mathcal{B}(G,\tau_l\otimes\rho-i\lambda\otimes 1)\) consisting of all hyperfunctions \(f\) on \(G\) that satisfy
\(f(gman)=a^{i\lambda-\rho}\tau_l(m)^{-1}f(g), \textit{for} \,\, g\in G, m\in M, a\in A, n\in N\).
Then the Poisson transform is the \(G\)-map defined from \(\mathcal{B}(G,\tau_l\otimes\rho-i\lambda\otimes 1)\)  to \(C^\infty(G,\tau_l)\)
by
\begin{align*}
P_{\lambda,l}f(g)=\int_K \tau_l(k)f(gk)\,{\rm d}k.
\end{align*}
By the Iwasawa decomposition, the restriction from \(G\) to \(K\) gives an isomorphism from \(\mathcal{B}(G,\tau_l\otimes\rho-i\lambda\otimes 1)\) onto \(\mathcal{B}(K,\tau_l)\) the space of all hyperfunctions \(f\) on \(K\) such that \(\displaystyle f(km)=\tau_l(m)^{-1}f(k), \textit{for}\,\,  k\in K,  m\in M\).\\
Then a direct calculation shows that the Poisson transform of \(f\in \mathcal{B}(K,\tau_l)\) is given by 
\[ P_{\lambda,l}f(g)=\int_K {\rm e}^{-(i\lambda+\rho)H(g^{-1}k)}\tau_l(\kappa(g^{-1}k))f(k)\,{\rm d}k.\]
Recall from the introduction that \( P_{\lambda,l}f\) are joint eigenfunctions of the commutative algebra \(\mathbb{D}(E_l)\), as \(f\) runs \(\mathcal{B}(K,\tau_l)\). More precisely let \(\gamma_l\) denote the Harish-Chandra isomorphism from \(\mathbb{D}(E_l)\) onto \(\mathcal{S}(\mathfrak{a}_c)^W\) the set of \(W\)-invariant elements in the symmetric algebra \(\mathcal{S}(\mathfrak{a}_c)\), see \cite{Sh1} for detailed discussion on the subject.\\ Let \(\mathcal{E}_{\lambda,l}(G)\) denote the solution space of the system of differential equations on \(C^\infty(G,\tau_l)\)
\[DF=\gamma_l(D)(\lambda)F, \quad D\in \mathbb{D}(E_l).\]
Since \(\mathbb{D}(E_l)\) contains an elliptic element coming from the Casimir element, \(\mathcal{E}_{\lambda,l}(G)\) consists of real analytic functions.\\
Now we recall a result due to Shimeno \cite{Sh} on the characterization of the image of the Poisson transform, which reads in our case as follows
\begin{theorem}\cite{Sh}\label{Shimeno}
Let \(l\in{\Z}\), \(\lambda\in \mathfrak{a}^\ast_c\simeq {\C}^r\) satisfy the conditions 
\[-2i\lambda_j\notin \{1,2,\cdots\}\,  (1\leq j\leq r), \quad i(\lambda_k\pm \lambda_j)\notin \{1,2,\cdots\} \,  (1\leq k<j\leq r),\]
and \[b+1+i\lambda_j\pm l\notin 2{\Z}^-,\]
then the Poisson transform \(P_{\lambda,l}\) is an isomorphism from \(\mathcal{B}(K,\tau_l)\) onto \(\mathcal{E}_{\lambda,l}(G)\).
\end{theorem}
\section{Uniform estimate for the \(\tau_{-l}\)-elementary spherical functions }
In this section we prove a  uniform estimate for  the \(\tau_{-l}\)-elementary spherical functions \(\varphi_{\lambda,l}\).\\ Recall from [\cite{Sh1}, Proposition 4.6] when \(G=SU(r,r+b)\) that  the functions of the form
\begin{equation*}\label{spherical}
\varphi_{\lambda,l}(g)=\int_K {\rm e}^{-(i\lambda+\rho)H(g^{-1}k)}\tau_l(\kappa(g^{-1}k)k^{-1})\,{\rm d}k
\end{equation*}
exhaust the class of the elementary spherical functions of type \(\tau_{-l}\), with \(\lambda\in \mathfrak{a}^\ast_c\).\\
Since \(G=K\overline{A^+}K\), \(\varphi_{\lambda,l}\) is completely defined by its restriction to \(A^+\). Moreover \(\varphi_{\lambda,l}\) can be expressed by the  Heckman-Opdam's hypergeometric functions as follows:\\
For \(l\in {\Z}\), let \(m_\alpha(l)\) be a deformation of root multiplicities defined by 
\[m_{\alpha_j}(l)=2b+2l, \quad m_{\alpha_j\pm \alpha_k}(l)=2 \quad \textit{and} \quad m_{2\alpha_j}(l)=1-2l,\] and let 
\(
\rho(l)=\rho-l\displaystyle\sum_{j=1}^r\alpha_j
\).\\
Put \(R=2\Sigma\) and define the multiplicities function \(k(l)\) on \(R\) by \(k_{2\alpha}(l)=\frac{1}{2}m_\alpha(l)\).\\
Let \(F(\lambda,k(l),a)\) be the  Heckman-Opdam hypergeometric function associated with \(R\) and the multiplicities \(k(l)=(b+l,1,1/2-l)\).  Then by [\cite{HS}, Theorem 5.2.2] the \(\tau_{-l}\)-elementary spherical function \(\varphi_{\lambda,l}\) is given by
\begin{align*}
\varphi_{\lambda,l}(a_T)=\prod_{j=1}^r(\frac{{\rm e}^{t_j}+{\rm e}^{-t_j}}{2})^{-l}F(i\lambda,k(l),a_T),\quad a_T={\rm e}^{H_T}.
\end{align*}
Moreover the above hypergeometric function of Heckman-Opdam can be expressed  in terms of the Jacobi functions
\[\phi_{\mu}^{(\alpha,\beta)}(t)=F(\frac{\alpha+\beta+1+i\mu}{2},\frac{\alpha+\beta+1-i\mu}{2}, \alpha+1; -\sinh^2 t).
\]
More precisely, put \(\displaystyle u(a_T)=\prod_{j=1}^r({\rm e}^{t_j}+{\rm e}^{-t_j})\), then  for $\lambda\in \mathfrak{a}^{*}_{\C}$ we have 
\begin{eqnarray}\label{spherical function} 
\varphi_{\lambda,l}(a_T)&=& u(a_T)^{-l}\frac{B_1}{\prod\limits_{1\leq i<j \leq r}(\lambda_{i}^{2}-\lambda_{j}^{2})}\dfrac{\det (\phi^{(b,-l)}_{\lambda_i}(t_j))}{\omega(a_T)},
\end{eqnarray}
with $\displaystyle B_1=(-1)^{r(r-1)/2}2^{r(2(r-1)+l)}\prod_{j=1}^r(b+j)^{r-j}j!$\\
This follows from [\cite{Sh2}, Theorem 2.5], with \(k_s=b+l, k_m=1\) and \(k_l=1/2-l\).\\
Now to state and prove the main result of this section let  us introduce the polynomial function \(\displaystyle \pi:\overline{\mathfrak{a}^+}\rightarrow {\R}$ given by
\begin{align*}
 \pi(\lambda)=(\frac{1}{4(b+2r)})^{r^2}\prod_{j=1}^r\lambda_j\prod_{1\leq i<j\leq r}(\lambda_i^2-\lambda_j^2)\quad \lambda=\sum_{j=1}^r\lambda_j \alpha_j
\end{align*}
\begin{k lemma}
Let \(l\in{\Z}\). There exist a positive constant \(C\) and a \\non-negative integer \(d\) such that   for \(\lambda \in a^{*}\)
and \(a_T \in \overline{A^+}\) we have
\begin{eqnarray}\label{estimate1}
\left|\pi(\lambda)\varphi_{\lambda,l}(a_T)u(a_T)^{l}\right| &\leq& C (1+\mid \lambda\mid^2)^{d}e^{-\rho(l)(H_T)}, \quad a_T={\rm e}^{H_T}.
\end{eqnarray}
\end{k lemma}
For the proof of the Key Lemma, we shall need the following estimate on the Jacobi functions.
\begin{lemma A1}
For \(\alpha ,\beta \in \R \) such that \(\alpha>-1/2\)  and  \(n\in \N\) there exists a positive constant \(C\)  such that
\begin{align}\label{estimate J}
\mid \lambda\dfrac{d^{n}}{dt^{n}}\phi_{\lambda}^{\alpha,\beta}(t)\mid\leq C(1+\lambda^{2})^{n+1}e^{-(\alpha+\beta+1) t},
\end{align}
for all \(t>0\) and \(\lambda\in{\R}\).
\end{lemma A1}
The proof of this Lemma is postponed to  the last section.\\
Now we give the proof of the Key lemma.
\begin{proof} We will proceed by induction on the rank \(r\).\\
First of all, since the function  \(a_T \rightarrow \varphi_{\lambda,l}(a_T) \) is continuous it is enough to prove the Key lemma for  $a_T\in A^{+}$  with \(t_1>1\) .\\
Note that \begin{align*}
\pi(\lambda)\varphi_{\lambda,l}(a_T)u^{l}(a_T)=\frac{B_1 \det (\lambda_i\phi^{b,-l}_{\lambda_i}(t_j))}{\omega(a_T)},
\end{align*}
and the denominator 
\[\omega(a_T)={\rm e}^{2\sum_{j=1}^{r}(r-j)t_j}\prod_{1\leq i<j\leq r}(1-e^{-2(t_i-t_j)})(1-e^{-2(t_i+t_j)}),\]
so we have to show that there exists \(C>0\) such that for \(\lambda\in \mathfrak{a}^\ast\) and \( H_T\in \mathfrak{a}^+\) with $t_1>1$ 
\begin{align}\label{estimate x}
 \frac{\mid \det (\psi_{\lambda_i}(t_j))\mid}{\prod_{1\leq i<j\leq r}(1-e^{-2(t_i-t_j)})(1-e^{-2(t_i+t_j)})}\leq C(1+\mid \lambda\mid)^d{\rm e}^{-\sum_{j=1}^r(b+1-l)t_j}, 
\end{align}
where we have put \(\psi_{\lambda_i}(t_j)=\lambda_i\phi^{b,-l}_{\lambda_i}(t_j)\).\\
We consider the rank two case (\(r=2\)). Noting that \(1-{\rm e}^{-2(t_1+ t_2)}>1-{\rm e}^{-2}\)( \(t_1>1\))  we have   
\begin{align*}
 \frac{\mid \det (\psi_{\lambda_i}(t_j))\mid}{(1-e^{-2(t_1-t_2)})(1-e^{-2(t_1+t_2)})}\leq (1-{\rm e}^{-2})^{-1} \frac{\mid \det (\psi_{\lambda_i}(t_j))\mid}{(1-e^{-2(t_1-t_2)})}
\end{align*}
If \(t_1>t_2+1\) then \((1-2{\rm e}^{-2(t_1-t_2)})>1-{\rm e}^{-2}\) and the estimate (\ref{estimate x}) follows from (\ref{estimate J}).\\ 
When \(t_{2}<t_{1}\leq 1+t_{2}\) write  the determinant as 
\begin{align*}
\det (\psi_{\lambda_{i}}(t_j))_{1\leq i,j\leq2}=\left| \begin{array}{cc}
\psi_{\lambda_{1}}(t_{1})-\psi_{\lambda_{1}}(t_{2})\quad & \psi_{\lambda_{1}}(t_{2})\\
\psi_{\lambda_{2}}(t_{1})-\psi_{\lambda_{2}}(t_{2})\quad & \psi_{\lambda_{2}}(t_{2})
 \end{array}\right|.
\end{align*}
From this (\ref{estimate x}) follows easily. Indeed, using again (\ref{estimate J}), we have
\begin{eqnarray*}
\mid\psi_{\lambda_i}(t_1)-\psi_{\lambda_i}(t_2)\mid &\leq & C(1+ \lambda_i^{2})^{2}(t_1-t_2)\sup_{s\in [t_{2},t_{1}]}{\rm e}^{-(b+1-l)s} \quad i=1,2\\
&\leq &C(1+ \lambda_i^{2})^{2}(t_1-t_2)\sup_{s\in [t_{2},t_{1}]}{\rm e}^{-(b+1-l)(s-t_{1})}e^{-(b+1-l)t_{1}} ,\quad i=1,2\\
&\leq &C(1+\lambda_i^{2})^{2}(t_1-t_2)\left(\sup_{s\in [-1,0]}{\rm e}^{-(b+1-l)s}\right)e^{-(b+1-l)t_{1}}\quad i=1,2\\
&\leq & C(1+\lambda_i^{2})^{2}(t_1-t_2)e^{-(b+1-l)t_{1}}\quad i=1,2 
\end{eqnarray*}
Now, noting that the function  \(s\mapsto \frac{s}{1-e^{-s}}\) is bounded on \([0,1]\) we get 
\begin{align*}
\frac{|\det [(\psi_{\lambda_{i}}(t_j))_{1\leq i,j\leq2}]|}{1-e^{-(t_1-t_2)}}\leq C (1+ \mid\lambda\mid^{2})^{2}{\rm e}^{-(b+1-l)(t_1+t_2)}.
\end{align*}
This finishes the proof of the estimate (\ref{estimate x}) for \(r=2\).\\
Next, assume the assertion for \(r-1\).\\
\textbf{Case 1:} Suppose that there exists j \(\in \{2,..,r\}\) such that \(t_1>t_j+1\).\\
Let p the smallest integer among \(j \in \{2,..,r\}\), such that \(t_1>t_j+1\), then 
\[(\quad t_1-t_j>1, \forall j \geq p \quad and \quad t_1-t_j\leq 1, \forall  2\leq j \leq p-1  ).\]
Since \( t_1\pm t_j>1\) for \(p\leq j\leq r\) we get
\begin{align*}
\frac{\mid \det (\psi_{\lambda_i}(t_j))\mid}{\prod_{1\leq i<j\leq r}(1-e^{-2(t_i-t_j)})(1-e^{-2(t_i+t_j)})}\leq \frac{(1-{\rm e}^{-2})^{-2r+p}\mid \det (\psi_{\lambda_i}(t_j))\mid}{\prod_{j=1}^{p-1}(1-{\rm e}^{-2(t_1-t_j)})\prod_{2\leq i<j\leq r}(1-e^{-2(t_i-t_j)})(1-e^{-2(t_i+t_j)})}
\end{align*}
Now, let us  introduce the interpolation  polynomials \\
\[\left\{\begin{array}{ccc}L_{\lambda_i}(t):&=&\sum_{k=2}^{p-1}\psi_{\lambda_i}(t_k)l_k(t), \quad i=1,\cdots, r, \quad if \quad p\geq 3\\
L_{\lambda_{i}}(t)&=&0,\qquad \qquad \quad \quad \quad i=1,\cdots, r, \quad if \quad p=2\end{array}\right.\]
where \(l_k\) is the Lagrange basis polynomials associated to the points \(t_2,\cdots, t_{p-1}\),
\begin{align*}l_{j}(t)= \prod_{i=2,i\neq j}^{p-1}\dfrac{t-t_i}{t_j-t_i}, \quad j=2,...,p-1, \quad p>3,
 \end{align*}
and \( l_{2}(t)=1 \quad if\quad p=3\).\\
Next, in \(\det (\psi_{\lambda_i}(t_j))\) replace the first column by \(F_{\lambda_i}(t_1)= \psi_{\lambda_i}(t_1)-L_{\lambda_i}(t_1)\).
We have 
\begin{align*}
\det (\psi_{\lambda_i}(t_j))=\sum_{i=1}^r(-1)^{i+1}F_{\lambda_i}(t_1)\Delta_i(t_2,\cdots,t_r),
\end{align*}
where 
\begin{align*}
\Delta_i(t_2,..,t_r)=
\left| \begin{array}{cccc}
\psi_{\lambda_{1}}(t_{2}) & \ldots & \psi_{\lambda_{1}}(t_{r}) \\
\vdots &  & \vdots \\
\psi_{\lambda_{i-1}}(t_2) & \ldots & \psi_{\lambda_{i-1}}(t_{r})\\
\psi_{\lambda_{i+1}}(t_2) &  \ldots & \psi_{\lambda_{i+1}}(t_{r})\\  
\vdots &  & \vdots \\
\psi_{\lambda_r}(t_2) &  \ldots & \psi_{\lambda_r}(t_{r})\\  
\end{array}\right|.
\end{align*}
Thus for some $d_i\in \N,i=1,..,r$
\begin{align*}
\frac{\mid \det (\psi_{\lambda_i}(t_j))\mid}{\prod_{1\leq i<j\leq r}(1-e^{-2(t_i-t_j)})(1-e^{-2(t_i+t_j)})}\leq C\frac{\sum_{i=1}^r\mid F_{\lambda_i}(t_1)\mid (1+\mid\lambda'_i\mid^{2})^{d}}{\prod_{j=2}^{p-1}(1-e^{-2(t_1- t_j)})}{\rm e}^{-\sum_{j=2}^r(b+1-l)t_j},
\end{align*} 
by the induction hypothesis with $\lambda'_i=(\lambda_1,...,\lambda_{i-1},\lambda_{i+1},...,\lambda_{n})$.\\
Next, since the function \(t_1 \rightarrow F_{\lambda_i}(t_1)\) vanishes at \(t_1=t_2=..=t_{p-1}\), we may use the elementary lemma (see \ref{L}) to get 
\begin{align}\label{app est}|F_{\lambda_i}(t_1)|\leq \prod_{j=2}^{p-1}(t_1-t_j)\sup_{s\in[t_{p-1},t_1]}|F_{\lambda_i}^{(p-2)}(s)|.\end{align}
Noting that \((\frac{d}{ds})^{p-2}F_{\lambda_i}(s) = (\frac{d}{ds})^{p-2}\psi_{\lambda_i}(s)\), and using the estimate (\ref{estimate J}) we easily see that for  \(s\in [t_{p-1},t_1]\) 
 \[|F_{\lambda_i}^{(p-2)}(s)|\leq C(1+\lambda_i^2)^{p-1}e^{-(b+1-l)s}\leq  C(\sup_{s\in [-1,0]}e^{-(b+1-l)s})(1+\lambda_i^2)^{r-1}e^{-(b+1-l)t_1}\]
Putting all together, we get 
\begin{align*}
\frac{\mid \det (\psi_{\lambda_i}(t_j))\mid}{\prod_{1\leq i<j\leq r}(1-e^{-2(t_i-t_j)})(1-e^{-2(t_i+t_j)})}\leq C(1+\mid \lambda \mid^{2})^{d}{\rm e}^{-\sum_{j=1}^r(b+1-l)t_j},
\end{align*}
for some $d\in \N$, as to be shown.\\
\textbf{Case 2 :} For every j \( \in \{ 2,..,r \} \), we have  \( t_j <  t_1 \leq t_j +1\).\\
In \(\det (\psi_{\lambda_i}(t_j))\) we replace the first column by \(F_{\lambda_i}(t_1)= \psi_{\lambda_i}(t_1)-\sum_{j=2}^{r}l_{j}(t_1)\psi_{\lambda_i}(t_j)\)  where $(l_{j})_{j=2,..,r}$ is the lagrange basis of polynomials associated to the points $t_{2},...,t_{r}$. To conclude  we follow the same reasoning we did in the case 1. This finishes the proof of the Key Lemma.
\end{proof}
As an immediate  consequence  we get a uniform estimate for the spherical function of type \(\tau_{-l}\).
\begin{corollary}\label{cor}
Let \(l\in {\Z}\). There exists a positive constant \(C\) and \(d \in \N\) such that for \\\(\lambda\in \mathfrak{a}^\ast_{\textrm{reg}},\ \, H\in \overline{a^+}\)
\begin{equation}\label{estimate6}
|\pi(\lambda)\varphi_{\lambda,l}({\rm e}^H)|\leq C(1+\mid \lambda\mid^2)^d {\rm e}^{-\rho(H)}.
\end{equation}
\end{corollary}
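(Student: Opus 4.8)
The plan is to obtain the Corollary directly from the Key Lemma by controlling the extra weight $u^{l}({\rm e}^{H_T})$ that already appears in \eqref{estimate1}. Recall that $u(a_T)=2^{r}\prod_{i=1}^{r}\cosh t_i$ is strictly positive and that $\rho(l)=\rho-l\sum_{j=1}^{r}\alpha_j$, so that $\rho(l)(H_T)=\rho(H_T)-l\sum_{j=1}^{r}t_j$ for $H_T\in\overline{\mathfrak{a}^+}$. These two observations give the identity
\begin{equation*}
{\rm e}^{-\rho(l)(H_T)}\,u^{-l}({\rm e}^{H_T})={\rm e}^{-\rho(H_T)}\prod_{i=1}^{r}\Bigl(\frac{{\rm e}^{t_i}}{2\cosh t_i}\Bigr)^{l}.
\end{equation*}

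First I would divide the estimate \eqref{estimate1} by $u^{l}({\rm e}^{H_T})>0$ and substitute the identity above, which reduces matters to bounding the product $\prod_{i=1}^{r}\bigl({\rm e}^{t_i}/(2\cosh t_i)\bigr)^{l}$ by a constant depending only on $l$. For $t\ge 0$ one has ${\rm e}^{t}/(2\cosh t)=1/(1+{\rm e}^{-2t})\in[\tfrac{1}{2},1)$, so each factor is $\le 1$ when $l\ge 0$ and $\le 2^{|l|}$ when $l<0$; in either case the whole product is at most $2^{r|l|}$, uniformly in $H_T\in\overline{\mathfrak{a}^+}$. Absorbing this constant into $C_l$ yields exactly the inequality \eqref{estimate6}.

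Every step here is an elementary estimate, so there is no genuine obstacle; the one point needing a little care is the sign of $l$, which is why the bound on each factor ${\rm e}^{t_i}/(2\cosh t_i)$ is split into the cases $l\ge 0$ and $l<0$. All of the substantive work — the determinantal formula of Proposition~\ref{spherical function}, the Harish-Chandra type series expansion, and the induction on the rank — has already been carried out in the proof of the Key Lemma, and only this routine weight comparison remains.
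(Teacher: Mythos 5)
Your proposal is correct and is essentially the paper's own argument: the paper also deduces the corollary from the Key Lemma by bounding \({\rm e}^{-\rho(l)(H)}\,u({\rm e}^{H})^{-l}\) by a constant times \({\rm e}^{-\rho(H)}\). Your case analysis in the sign of \(l\) (giving the uniform constant \(2^{r|l|}\)) is in fact slightly more careful than the constant \(2^{-l}\) stated in the paper, but the difference is harmless since it is absorbed into \(C_l\).
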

We can now prove the following uniform estimate on the \(\tau_{-l}\)-elementary spherical functions.
\begin{lemma} Let \(l\in{\Z}\) and  \(\lambda\in \mathfrak{a}^\ast_{\textrm{reg}}\). Then  there exists a positive constant \(C_{\lambda,l}\) such that for all \(H_T\in \overline{\textbf{a}^{+}}\) we have
 \begin{eqnarray}\begin{split}\label{estimate7}
 |\varphi_{\lambda,l}({\rm e}^{H_T})-u({\rm e}^{H_T})^{-l}\sum_{s\in W}c(s\lambda,l){\rm e}^{(is\lambda-\rho(l))(H_T)}|
 \leq C_{\lambda,l}e^{-\rho(H_T)}e^{-\tau(H_T)}.
\end{split}\end{eqnarray}
In the above \(\tau(H)=\displaystyle\min_{\alpha\in \Psi}\alpha(H).\)
\end{lemma}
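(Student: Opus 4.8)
The plan is to derive the uniform estimate for $\varphi_{\lambda,l}$ on the whole group $G$ from the Harish--Chandra series expansion, exactly as Harish--Chandra did for the spherical function in the $l=0$ case. The starting point is the expansion recalled before Proposition~\ref{spherical function}: for $\lambda$ regular with $\langle\mu,\mu\rangle\neq2\langle\mu,i\lambda\rangle$ for all $\mu\in\Lambda\setminus\{0\}$,
\[
\varphi_{\lambda,l}(a_T)=2^{-rl}\Big(\prod_{j=1}^r\cosh t_j\Big)^{-l}\sum_{s\in W}\mathbf{c}(s\lambda,l)\,\Phi_{s\lambda,l}(a_T),
\]
and by definition $u(a_T)^{-l}=2^{-rl}\big(\prod_j\cosh t_j\big)^{-l}$. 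For $g\in G$ we have $g\in\pi_0(g)\exp(A^+(g))$ with $A^+(g)\in\overline{\mathfrak a^+}$, and since $\varphi_{\lambda,l}$ is a function of type $\tau_{-l}$ under left–right $K$-translation the value $\varphi_{\lambda,l}(g)$ equals $\tau_l^{-1}(\pi_0(g))\,\varphi_{\lambda,l}(e^{A^+(g)})$. So the whole estimate reduces to the behaviour of $\varphi_{\lambda,l}(e^{H})$ for $H=A^+(g)\in\overline{\mathfrak a^+}$.

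First I would isolate the \emph{leading term}: from the series \eqref{series} we have
\[
\Phi_{s\lambda,l}(e^H)=e^{(is\lambda-\rho(l))(H)}\Big(1+\sum_{\mu\in L\setminus\{0\}}\Gamma_{\mu,l}(s\lambda)\,e^{-\mu(H)}\Big),
\]
so that
\[
u(e^H)^{-l}\sum_{s\in W}\mathbf c(s\lambda,l)\,\Phi_{s\lambda,l}(e^H)
-u(e^H)^{-l}\sum_{s\in W}\mathbf c(s\lambda,l)\,e^{(is\lambda-\rho(l))(H)}
=u(e^H)^{-l}\sum_{s\in W}\mathbf c(s\lambda,l)e^{(is\lambda-\rho(l))(H)}\!\!\sum_{\mu\in L\setminus\{0\}}\!\!\Gamma_{\mu,l}(s\lambda)e^{-\mu(H)}.
\]
On $\overline{\mathfrak a^+}$ every nonzero $\mu\in L$ is a nonnegative integer combination of the simple roots $\beta_1,\dots,\beta_r$ with at least one positive coefficient, so $e^{-\mu(H)}\le e^{-\tau(H)}$ where $\tau(H)=\min_{\alpha\in\Psi}\alpha(H)$; pulling out one factor $e^{-\tau(H)}$ and bounding the remaining tail gives an estimate of the shape $C_{\lambda,l}\,e^{-\rho(l)(H)}e^{-\tau(H)}$ for the error, after using $|is\lambda|$ is purely imaginary so $|e^{is\lambda(H)}|=1$ and $u(e^H)^{-l}e^{-\rho(l)(H)}\le 2^{-l}e^{-\rho(H)}$ as in Corollary~\ref{cor}. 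Replacing $H$ by $A^+(g)$ and multiplying by $|\tau_l^{-1}(\pi_0(g))|=1$ yields \eqref{estimate7}.

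The two points that require genuine care — and which I expect to be the main obstacle — are the convergence and uniform bound of the tail $\sum_{\mu\in L\setminus\{0\}}\Gamma_{\mu,l}(s\lambda)e^{-\mu(H)}$, and the restriction to $H$ away from the walls. Since $\lambda$ is fixed, regular, and real, the constants $\Gamma_{\mu,l}(s\lambda)$ satisfy Gangolli-type polynomial bounds $|\Gamma_{\mu,l}(s\lambda)|\le C_{\lambda,l}(1+|\mu|)^{N}$ coming from the recursion determined by the eigenvalue equation \eqref{diff1} (this is the step ``follow the same method as in \cite{H}'' invoked right after \eqref{series}); combined with the standard fact that $L$ has polynomial growth and that the series converges for $H$ in the interior of $\overline{\mathfrak a^+}$, one gets a bound $C_{\lambda,l}\,e^{-\tau(H)}$ on the interior. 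To cover a neighbourhood of the walls one argues by continuity: the difference appearing in \eqref{estimate7} is, via the integral formula \eqref{S} and $A^+(g)$-continuity, a continuous function of $g$, and on the region $\{\tau(A^+(g))\le 1\}$ the right-hand side $C_{\lambda,l}e^{-\rho(A^+(g))}e^{-\tau(A^+(g))}$ is bounded below by a positive multiple of $e^{-\rho(A^+(g))}$, while the left-hand side is $O(e^{-\rho(A^+(g))})$ by the already-proved Corollary~\ref{cor} applied to both $\varphi_{\lambda,l}$ and the explicit exponential sum (whose coefficients $\mathbf c(s\lambda,l)$ are finite for $\lambda$ regular); so on that region the inequality holds after enlarging $C_{\lambda,l}$. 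Patching the interior estimate with this boundary estimate gives \eqref{estimate7} for all $g\in G$.
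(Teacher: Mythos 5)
Your proposal is correct and follows essentially the same route as the paper: reduce to $H=A^{+}(g)$ via the Cartan decomposition, treat the region $\tau(H)\leq 1$ by the uniform Key Lemma bound (\ref{estimate6}) plus the trivial bound $u({\rm e}^{H})^{-l}{\rm e}^{-\rho(l)(H)}\leq 2^{-l}{\rm e}^{-\rho(H)}$, and treat $\tau(H)\geq 1$ by the Harish--Chandra series with Gangolli-type estimates on the coefficients $\Gamma_{\mu,l}$, which is precisely the step the paper delegates to the $l=0$ argument of \cite{Ha}. Only minor imprecisions (the tail bound $C_{\lambda,l}{\rm e}^{-\tau(H)}$ holds uniformly only for $\tau(H)$ bounded away from the walls, which your patching with the wall region anyway makes harmless) separate your write-up from the paper's proof.
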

\begin{proof}
By using (\ref{estimate6}) and noting that \({\rm e}^{-\rho(l)(H_T)}(u({\rm e}^{H_T}))^{-l}\leq {\rm e}^{-\rho(H_T)}\) we easily see that
\begin{equation*}\begin{split}
|\varphi_{\lambda,l}(e^{H_T})-u(e^{H_T})^{-l}&\sum_{s\in W}c(s\lambda,l)e^{(is\lambda-\rho(l))(H_T)}|\\
&\leq (C\mid \pi(\lambda)\mid^{-1}(1+\mid \lambda\mid^2)^d+\mid W\mid \mid c(\lambda,l)\mid ) {\rm e}^{-\rho(H_T)},
\end{split}\end{equation*}
for any \( H_T\in \overline{\textbf{a}^{+}}\).\\
If   \( H_T\in \overline{\textbf{a}^{+}}\)  with \(\tau(H_T)<1\), then using the trivial estimate \({\rm e}^{-\tau(H_T)}{\rm e}>1\), we get
\begin{equation*}\begin{split}
|\varphi_{\lambda,l}(e^{H_T})-u(e^{H_T})^{-l}&\sum_{s\in W}c(s\lambda,l)e^{(is\lambda-\rho(l))(H_T)}|\\
&\leq (C\mid \pi(\lambda)\mid^{-1}(1+\mid \lambda\mid^2)^d+\mid W\mid \mid c(\lambda,l)\mid) {\rm e}{\rm e}^{-\tau(H_T)} {\rm e}^{-\rho(H_T)}.
\end{split}\end{equation*}
For \(H_T\) with \(\tau(H_T)\geq 1\), the estimate follows  in the same manner  as in  the proof of  the case \(l=0\), so we omit it, see \cite{Ha}.
\end{proof}
\newpage
\section{Fourier restriction Theorem}
The main result of this section is
\begin{proposition}(Fourier restriction theorem)\label{uniform}
Let \(l\in{\Z}\). There exists a positive constant \(C\) such that for \(\lambda\in \mathfrak{a}_{\textit{reg}}^\ast\) and \( R>1\) we have
\begin{equation}\label{F.uniform}
\bigg(\int_K\mid \widetilde{F}_{l}(\lambda,k)\mid^2\,{\rm d}k\bigg)^\frac{1}{2}\leq C \mid c(\lambda,l)\mid R^\frac{r}{2}\bigg(\int_{G/K}\mid F(g)\mid^2{\rm d}(gK)\bigg)^\frac{1}{2},
\end{equation}
for every  \(F\in L^2(G,\tau_l)\) with \(supp F\subset B(R)\), 
\end{proposition}
To prove Proposition \ref{uniform} we follow mainly the same method used by Kaizuka for \(l=0\) which can be traced back to Anker \cite{A}. To this end we shall need estimates of the Harish-Chandra \(c\)-function.\\
Let \(\pi_l:\mathfrak{a}^\ast\rightarrow {\C}\) be the polynomial function defined by 
\begin{displaymath}
\pi_l(\lambda)=(\frac{1}{4(b+2r)})^{r^2}\left\{\begin{array}{rcl}
\prod_{j=1}^r\lambda_j\prod_{1\leq j<k\leq r}(\lambda_j^2-\lambda_k^2)&\textrm{if \(b+1\pm l\notin 2{\Z}^-\),}\\
\prod_{1\leq j<k\leq r}(\lambda_j^2-\lambda_k^2) &\textrm{if \(b+1\pm l\in 2{\Z}^-\),}
\end{array}\right.\quad \lambda=\sum_{j=1}^r\lambda_j\alpha_j.
\end{displaymath}
Define  the function \(\mathbf{b}(\lambda,l)\) on \(\mathfrak{a}^\ast\) by \(\displaystyle \mathbf{b}(\lambda,l)=\pi_l(i\lambda)c(\lambda,l)\).
\begin{lemma} Let \(l\in{\Z}\).
\begin{itemize}
\item[(i)] The function \(\mathbf{b}(.,l)\) has no zero in \(\mathfrak{a}^\ast\).
\item[(ii)] There exists a positive constant \(C\) such that for \(\lambda\in \mathfrak{a}^\ast\) we have 
\begin{equation}\label{b-estimate}
C^{-1}\prod_{j=1}^r(1+\mid \lambda_j\mid^2)^{\frac{b-\varepsilon(l)/2}{2}}\leq \mid \mathbf{b}(\lambda,l)\mid^{-1}\leq C \prod_{j=1}^r(1+\mid \lambda_j\mid^2)^{\frac{b-\varepsilon(l)/2}{2}},
\end{equation}
with \(\varepsilon(l)=\pm 1\) according to \(b+1\pm l\notin 2{\Z}^-\) or \(b+1\pm l\in 2{\Z}^-\).
\end{itemize}
\end{lemma}
\begin{proof}
\begin{itemize}
\item[(i)] 
Assume \(b+1\pm l\notin 2{\Z}^-\), then
\begin{equation*}
\mathbf{b}(\lambda,l)=\prod_{j=1}^{n}\dfrac{2^{-i\lambda_{j}}\Gamma(i\lambda_{j}+1)}{\Gamma(\dfrac{1}{2}(b+1+i\lambda_{j}+l))\Gamma(\dfrac{1}{2}(b+1+i\lambda_{j}-l))},
\end{equation*}
and clearly \(\mathbf{b}(\lambda,l)\) has no zero in \(\mathfrak{a}^\ast\).\\
When \(b+1\pm l\in 2{\Z}^-\), then \(\mathbf{b}(\lambda,l)\) has a priori zeros and poles in Weyl walls \[\displaystyle\cup_{j=1}^r \{\lambda\in \mathfrak{a}^\ast;\lambda_j=0\}.\]
We claim that this is not the case. Indeed,  suppose \(b+1-l\in 2{\Z}^-\), which implies  that \(l\geq b+1\).\\
We have  \(\displaystyle\lim_{\lambda_j\rightarrow 0}\dfrac{2^{-i\lambda_{j}}\Gamma(i\lambda_{j})}{\Gamma(\dfrac{1}{2}(b+1+i\lambda_{j}+l))\Gamma(\dfrac{1}{2}(b+1+i\lambda_{j}-l))}=\frac{1}{2}\frac{(-1)^{(l-b-1)/2}}{(\frac{l-b-1}{2})!\Gamma(\frac{l+b+1}{2})}\). Thus  \(\mathbf{b}(\lambda,l)\) is well defined on \(\mathfrak{a}^\ast\) and do not vanishes.
\item[(ii)] The proof uses the well known property of the \(\Gamma\) function
\begin{equation}\label{G-estimate}
\lim_{\mid z\mid \rightarrow+\infty} \frac{\Gamma(z+a)}{\Gamma(z)}{\rm e}^{-a\log z}=1, \, \, \mid arg z\mid\leq \pi-\delta,
\end{equation}
where \(a\) is any complex number, \(log\) is the principal value of the logarithm and \(\delta>0\).\\
Suppose first that \(b+1\pm l\notin 2{\Z}^-\). Put
\begin{equation*}
\mathbf{b}(\lambda_j,l)=\dfrac{2^{-i\lambda_{j}}\Gamma(i\lambda_{j}+1)}{\Gamma(\dfrac{1}{2}(b+1+i\lambda_{j}+l))\Gamma(\dfrac{1}{2}(b+1+i\lambda_{j}-l))}.
\end{equation*}
Then using the duplication formula for the gamma function
\begin{equation}\label{gamma}
\Gamma(2z)=\frac{2^{2z-2}}{\sqrt{\pi}}\Gamma(z)\Gamma(z+\frac{1}{2}),
\end{equation}
we may rewrite \(\mathbf{b}(\lambda_j,l)\) as 
\begin{equation*}
\mathbf{b}(\lambda_j,l)=\frac{1}{2\sqrt{\pi}}\dfrac{\Gamma(\frac{i\lambda_{j}+1}{2})\Gamma(\frac{i\lambda_{j}}{2}+1)}{\Gamma(\dfrac{1}{2}(b+1+i\lambda_{j}+l))\Gamma(\dfrac{1}{2}(b+1+i\lambda_{j}-l))},
\end{equation*}
and from (\ref{G-estimate}) we get 
\begin{equation*}
\mid \mathbf{b}(\lambda_j,l)\mid\leq C(1+\lambda_j^2)^{\frac{1-2b}{4}}
\end{equation*}
\begin{equation*}
\mid \mathbf{b}(\lambda_j,l)\mid^{-1}\leq C(1+\lambda_j^2)^{\frac{2b-1}{4}}.
\end{equation*}
Multiplying over \(j\) gives the result.\\
Next, if \(b+1-l\in 2{\Z}^-\), then \(\displaystyle\mathbf{b}(\lambda_j,l)=\dfrac{2^{-i\lambda_{j}}\Gamma(i\lambda_{j})}{\Gamma(\dfrac{1}{2}(b+1+i\lambda_{j}+l))\Gamma(\dfrac{1}{2}(b+1+i\lambda_{j}-l))}\).\\
We may use once again (\ref{gamma}) as well as (\ref{G-estimate}) to show as in i) that   \(\mathbf{b}(\lambda_j,l)\) and its inverse satisfy the following estimates
\begin{equation*}
\mid \mathbf{b}(\lambda_j,l)\mid\leq C(1+\lambda_j^2)^{\frac{-1-2b}{4}}
\end{equation*}
\begin{equation*}
\mid \mathbf{b}(\lambda_j,l)\mid^{-1}\leq C(1+\lambda_j^2)^{\frac{1+2b}{4}}.
\end{equation*}
\end{itemize}
this finishes the proof of the Lemma.
\end{proof}
For the proof of Proposition \ref{uniform}, we will need the following auxiliary result, see \cite{A}.\\ Let \(\eta\) be a positive Schwartz function on \({\R}\) whose Fourier transform has a compact support. For \(m\in {\R}\), set
\[\eta_{m}(x)=\int_{\R}\eta(t)(1+|t-x|^{2})^{m/2}\, {\rm d}t.\]
\begin{lemma}\cite{A}\label{Ank}
\item[(i)] \(\eta_m\) is a positive \(C^\infty\)-function with
\begin{equation}\label{A}
C^{-1}(1+|t|^{2})^{m} \leq \eta_{m}(t) \leq C(1+|t|^{2})^{m},
\end{equation}
for some positive constant \(C\).
\item[(ii)] The Fourier transform of \(\eta_m\) has a compact support.
\end{lemma}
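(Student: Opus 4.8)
We recognize the statement as the auxiliary lemma of Anker \cite{A}, and the plan is to deduce both assertions from the fact that $\eta_m$ is a convolution. Performing the change of variable $v=x-t$ in the defining integral one sees at once that
\begin{equation*}
\eta_m=\eta\ast g_m,\qquad g_m(v)=(1+|v|)^{m/2},
\end{equation*}
so that every property of $\eta_m$ follows from the Schwartz decay of $\eta$ together with the fact that $g_m$ (and hence its Fourier transform) is tempered.

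For part (i), smoothness and positivity are immediate. Since $\eta$ is Schwartz, each derivative $\partial^k\eta$ is integrable against the polynomially bounded weight $g_m$, uniformly for $x$ in compact sets, so one may differentiate under the integral sign and obtain $\eta_m\in C^\infty(\R)$ with $\partial^k\eta_m=(\partial^k\eta)\ast g_m$; and $\eta_m>0$ because $\eta>0$ and $g_m>0$. For the two-sided bound (\ref{A}) I would invoke Peetre's inequality
\begin{equation*}
2^{-|s|}(1+|x|)^{s}(1+|v|)^{-|s|}\;\le\;(1+|x-v|)^{s}\;\le\;2^{|s|}(1+|x|)^{s}(1+|v|)^{|s|}\qquad(s\in\R),
\end{equation*}
applied with $s=m/2$. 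Inserting this into $\eta_m(x)=\int_\R\eta(v)(1+|x-v|)^{m/2}\,{\rm d}v$ and using $\int_\R\eta(v)(1+|v|)^{|m|/2}\,{\rm d}v<\infty$ for the upper estimate, and the fact that $\int_\R\eta(v)(1+|v|)^{-|m|/2}\,{\rm d}v$ is a strictly positive finite constant (because $\eta>0$) for the lower estimate, yields (\ref{A}).

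For part (ii), apply the Fourier transform to $\eta_m=\eta\ast g_m$. Since $\eta$ is Schwartz and $\widehat{\eta}$ has compact support by hypothesis, $\widehat{\eta}\in C_c^\infty(\R)$; since $g_m$ is polynomially bounded, $\widehat{g_m}$ is a tempered distribution; and convolution by the tempered distribution $g_m$ intertwines with multiplication on the Fourier side, so $\widehat{\eta_m}=\widehat{\eta}\,\widehat{g_m}$. The product of a $C_c^\infty$ function with a tempered distribution is a distribution supported in ${\rm supp}\,\widehat{\eta}$, which is compact, so $\widehat{\eta_m}$ has compact support. The only points needing a little care are the justification of differentiation under the integral in part (i) — handled by the rapid decay of every derivative of $\eta$ — and bookkeeping of the constants in the lower bound of (\ref{A}); neither constitutes a genuine obstacle, so the main interest of the lemma lies entirely in its later use.
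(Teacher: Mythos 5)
The paper itself gives no proof of this lemma: it is quoted from Anker \cite{A}, so there is no internal argument to compare against. Your route is the standard one behind Anker's construction, and its structure is sound: writing \(\eta_m=\eta\ast g_m\) with \(g_m(v)=(1+|v|)^{m/2}\), getting smoothness, positivity and a two-sided polynomial bound from the Schwartz decay of \(\eta\) together with Peetre's inequality, and getting part (ii) from \(\widehat{\eta_m}=\widehat{\eta}\,\widehat{g_m}\), where \(\widehat{\eta}\in C_c^\infty(\R)\) and \(\widehat{g_m}\) is a tempered distribution, so that the product is supported in the compact set \(\mathrm{supp}\,\widehat{\eta}\). Part (ii) and the qualitative part of (i) are correct as written.

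The one genuine issue is the exponent bookkeeping in (\ref{A}). With the definition as printed, \(\eta_m(x)=\int_{\R}\eta(t)(1+|t-x|)^{m/2}\,{\rm d}t\), your Peetre argument with \(s=m/2\) gives \(C^{-1}(1+|x|)^{m/2}\leq \eta_m(x)\leq C(1+|x|)^{m/2}\), i.e. comparability with \((1+|x|^2)^{m/4}\), not with \((1+|t|^2)^{m}\) as displayed in (\ref{A}); so the assertion that the computation ``yields (\ref{A})'' is not literally true. The mismatch originates in the paper's normalization rather than in your method: the bound \((1+|t|^2)^m\) is incompatible with the printed definition for \(m\neq 0\), and what is actually needed later, in the proof of Proposition \ref{uniform}, is \(\eta_m(t)\) comparable to \((1+|t|^2)^{m/2}\), so that \(\prod_{j}\eta_{\frac{2b-\epsilon(l)}{2}}(\lambda_j)\) is comparable to \(\prod_{j}(1+|\lambda_j|^2)^{\frac{2b-\epsilon(l)}{4}}\). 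You should therefore either take the weight \((1+|t-x|)^{m}\) (equivalently \((1+|t-x|^2)^{m/2}\)) in the definition of \(\eta_m\), or state the conclusion with the exponent your computation actually produces, and say explicitly which normalization you are proving; with that adjustment your argument is complete and is essentially Anker's.
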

Now we come to the proof of Proposition \ref{uniform}.
\begin{proof}
We first notice that it is sufficient to establish (\ref{F.uniform}) for functions \(F\in C_c^\infty(G,\tau_l)\) supported in the ball \(B(R)\). By the Plancherel formula (\ref{P}) we have
\[\int_{B(R)}\mid F(g)\mid^2 {\rm d}g_K\geq \dfrac{1}{|W|} \int_K\int_{\mathfrak{a}^\ast}\mid \widetilde{F}_l(\lambda,k)\mid^2\mid c(\lambda,l)\mid^{-2}{\rm d}\lambda {\rm d}k.
\]
Therefore it is sufficient  to prove
\begin{equation}\label{d1}
\int_K\int_{\mathfrak{a}^\ast}\mid \widetilde{F}_l(\lambda,k)\mid^2\mid c(\lambda,l)\mid^{-2}{\rm d}\lambda {\rm d}k\geq C R^{-r} \mid c(\lambda,l)\mid^{-2}\int_K\mid \widetilde{F}_l(\lambda,k)\mid^2{\rm d}k
\end{equation}
for some positive constant \(C\).\\
Define the function \(\widetilde{\mathbf{c}}(\lambda,l)^{-1}\) on \(\mathfrak{a}^\ast_{\textit{reg}}\) by
\begin{equation*}
\widetilde{\mathbf{c}}(\lambda,l)^{-1}=\pi_l(i\lambda)\prod_{j=1}^r \eta_{\frac{2b-\epsilon(l)}{4}}(\lambda_j).
\end{equation*}
Then from (\ref{A}) and(\ref{b-estimate}) we  get
\begin{equation}\label{c-estimate1}
C^{-1}\mid c(\lambda,l)\mid^{-1}\leq \mid \widetilde{\mathbf{c}}(\lambda,l)\mid^{-1}\leq C \mid c(\lambda,l)\mid^{-1}.
\end{equation}
Now (\ref{d1}) is equivalent to
\begin{equation}\label{d2}
\mid\widetilde{\mathbf{c}}(\lambda,l)\mid^{-2}\int_K\mid \widetilde{F}_l(\lambda,k)\mid^2{\rm d}k\leq  C R^r \int_K\int_{\mathfrak{a}^\ast}\mid \widetilde{F}_l(\lambda,k)\mid^2\mid \widetilde{\mathbf{c}}(\lambda,l)\mid^{-2}{\rm d}\lambda \,{\rm d}k,
\end{equation}
which is in turn  equivalent to
\begin{equation}\label{d3}
\int_K\mid \mathcal{F}_\mathfrak{a}(T\ast \mathcal{R}F(,k))(\lambda)\mid^2 {\rm d}k\leq  C R^r \int_K\int_{\mathfrak{a}^\ast}\mid \mathcal{F}_\mathfrak{a}(T\ast \mathcal{R}F(,k))(\lambda)\mid^2 {\rm d}\lambda\, {\rm d}k,
\end{equation}
where \(T\) is the tempered distribution defined  on \(\mathfrak{a}\) by \(\displaystyle T= \mathcal{F}_a^{-1}[\widetilde{\mathbf{c}}(,l)]^{-1}\). Moreover \(T\) has compact support in \(\mathfrak{a}\), by Lemma \ref{Ank}.  Let \(R_0>1\) such that \(\textit{supp}T\subset \overline{\mathfrak{a}(R_0)}\).\\
By using the euclidean Plancherel formula, we are left to prove
\begin{equation}\label{d4}
\int_K\mid \mathcal{F}_\mathfrak{a}(T\ast \mathcal{R}F(,k))(\lambda)\mid^2 {\rm d}k\leq  C R^r \int_K\int_{\mathfrak{a}^\ast}\mid (T\ast \mathcal{R}F(,k))(H)\mid^2 {\rm d}H\, {\rm d}k,
\end{equation}
where \(\ast\) denotes the convolution operator on \(\mathfrak{a}\).\\
Now, since \(F\) has compact support \(\displaystyle\subset \mathfrak{a}(R)\) then \(\displaystyle\textit{supp}\,\mathcal{R}F \subset \mathfrak{a}(R)\times K\) by lemma \ref{Radon}. Therefore
\[\textrm{supp}\, (T\ast \mathcal{R}F(.,k))\subset \overline{\mathfrak{a}(R+R_0)}, \quad k\in K.\]
Thus
\begin{equation}\begin{split}
\mid \mathcal{F}_\mathfrak{a}(T\ast \mathcal{R}F(,k))(\lambda)\mid^2\leq &\left(\int_{\mathfrak{a}(R+R_0)}{\rm d}H\right)\int_\mathfrak{a}\mid (T\ast \mathcal{R}F(,k))(H)\mid^2 {\rm d}H\\
&\leq C R^r\int_\mathfrak{a}\mid (T\ast \mathcal{R}F(,k))(H)\mid^2 {\rm d}H,
\end{split}\end{equation}
which gives (\ref{d4}), and the proof of Proposition \ref{uniform} is finished.
\end{proof}
We can now prove  the uniform continuity estimate for the Poisson transform \(P_{\lambda,l}\).
\begin{corollary}\label{cor1}
Let \(l\in {\Z}\). There exists a positive constant \(C\) such that for \(\lambda\in\mathfrak{a}_{\textit{reg}}^\ast\)
we have
\begin{equation}\label{P.uniform}
\sup_{R>1}\left(\frac{1}{R^r}\int_{B(R)}\mid P_{\lambda,l}f(g)\mid^2\, {\rm d}g_K\right)^{\frac{1}{2}}\leq C \mid c(\lambda,l)\mid \left\|f\right\|_2,
\end{equation}
for every \(f\in L^2(K,\tau_l)\).
\end{corollary}
\begin{proof}
Let \(F\in L^2(G,\tau_l)\) with \(supp F\subset B(R)\), then we have
\begin{equation*}
\int_{B(R)}P_{\lambda,l}f(g)\overline{F(g)}\, {\rm d}g_K=\int_K f(k)\overline{\widetilde F_l(\lambda,k)}\,{\rm d}k.
\end{equation*}
Now the estimate (\ref{F.uniform}) implies the result by standard duality argument.
\end{proof}
\section{Asymptotic formula for the Poisson transform}
In  this section we give an asymptotic expansion for the Poisson transform.
To do so we introduce the function space \(B_l^{*}(G)\) on G, consisting of functions \(F\) in \( L^2_{loc}(G,\tau_l)\) satisfying
\[
\|F\|_{B_l^{*}(G)}=\sup_{j \in \N}[2^{-(\frac{r}{2})j}\int_{D_j}\mid F(g)\mid^2\, {\rm d}g_K]<\infty,
\] 
where \(D_0=\{gK\in G/K; d(0,g.0)<1 \}, D_j=\{gK \in G/K; 2^{j-1}\leq d(0,g.0)< 2^{j}\}\) for  \(j\geq 1\).\\
We note for use below that the \(B_l^\ast(G)\) norm is equivalent to the norm \(\|F\|_*\). Indeed, since \(2^j\) is in geometric progression we have
\begin{align*}
\parallel F\parallel_{B_l^{*}(G)}^2\leq \sup_{R>1}\frac{1}{R^r}\int_{B(R)}\mid F(g)\mid^2\, {\rm d}g_K\leq 2^r(1-2^{-r})^{-1}\parallel F\parallel_{B_l^{*}(G)}^2
\end{align*}
Define an equivalent relation on \(B_l^{*}(G)\) as follows:\\
For \(F_{1}, F_{2} \in B_l^{*}(G)\) we write \( F_{1} \simeq F_{2}\) if
\[\lim_{R\rightarrow +\infty}\frac{1}{R^{r}}\int_{B(R)}|F_{1}(g)-F_{2}(g)|^{2}dg_K =0.\]
Then the main result of this section can be stated as follows
\begin{theorem}\label{asympt0}
Let \(l\in {\Z}\) and \(\lambda\in \mathfrak{a}^\ast_{\textit{reg}}\). For \(f\in L^2(K,\tau_l))\) we have the following asymptotic expansion for the Poisson transform in \(B_l^{*}(G)\).
\begin{align}\label{asympt5}
\hspace{1,5cm}P_{\lambda,l}f(g)\simeq \tau_l^{-1}(k_2(g))\sum_{s\in W} c(s\lambda,l){\rm e}^{(is\lambda-\rho)(A^+(g))}U^l_{s,\lambda} f(k_1(g)),
\end{align}
\(g=k_1(g)e^{A^{+}(g)}k_2(g)\).\\
Here \(U_{s,\lambda}\) are unitary operators on \(L^2(K,\tau_l))\) defined by \(U_{s,\lambda}=P_{s\lambda,l}^{-1}\circ P_{\lambda,l}\).
\end{theorem} 
Most part of the proof of Theorem \ref{asympt0} consists in proving the proposition below.\\
Denote by \(\displaystyle \mathfrak{a}^+(R)=\{H\in \mathfrak{a}^+; \mid H\mid<R\}\).
\begin{proposition}\label{asympt}
Let \(l\in {\Z}\), \( \lambda\in\mathfrak{a}^\ast_{\textrm reg}\). Then  we have the asymptotic expansion in \(B_l^{*}(G)\) for \(\varphi_{\lambda,l}\).
 \begin{equation}\label{asympt3}
\varphi_{\lambda,l}(g) \simeq  \tau_l^{-1}(\kappa_1(g)\kappa_2(g))\sum_{s\in W}c(s\lambda,l){\rm e}^{(is\lambda-\rho)(A^+(g))},
\end{equation}
\(g=k_1(g)e^{A^{+}(g)}k_2(g)\).
\end{proposition}
\begin{proof}
Let us first show that the right hand side of (\ref{asympt3}) lies in \(B_l^{*}(G)\).\\
Since \(\Delta({\rm e}^{H})\leq {\rm e}^{2\rho(H)}\) and \(\tau_l\) is unitary, we get
\[\frac{1}{R^r}\int_{B(R)}\mid \tau_l^{-1}(\kappa_1(g)\kappa_2(g))\sum_{s\in W}c(s\lambda,l){\rm e}^{(is\lambda-\rho)(A^+(g))}\mid^2\,{\rm d}g_K\leq \mid W\mid \mid c(\lambda,l)\mid^2\int_{ \mathfrak{a}^+(1)}d{\rm H}<+\infty,\]
as to be shown.\\We turn to the proof of the asymptotic formula for \(\varphi_{\lambda,l}\). We have
\begin{equation*}\begin{split}
&\frac{1}{R^r}\int_{B(R)}\mid \varphi_{\lambda,l}(g)-\tau_l^{-1}(\kappa_1(g)\kappa_2(g))\sum_{s\in W}c(s\lambda,l){\rm e}^{(is\lambda-\rho)(A^+(g))}\mid^2\, {\rm d}g_K\\
&\leq \frac{2}{R^r}\int_{\mathfrak{a}^+(R)}\mid \varphi_{\lambda,l}({\rm e}^H)-u({\rm e}^H)^{-l}\sum_{s\in W}c(s\lambda,l){\rm e}^{(is\lambda-\rho(l))(H)}\mid^2\,{\rm e}^{2\rho(H)} {\rm d}H\\
&+ \frac{2}{R^r}\int_{\mathfrak{a}^+(R)}\mid \sum_{s\in W}c(s\lambda,l)
 {\rm e}^{(is\lambda-\rho)(H)}[(u({\rm e}^{H}))^{-l}{\rm e}^{l\sum_{j=1}^r\alpha_j(H)}-1]\mid^2  \,{\rm e}^{2\rho(H)} {\rm d}H\\
&=: I_1(R)+I_2(R).
\end{split}\end{equation*} 
By using (\ref{estimate7}) and making the transformation \(H\rightarrow RH\) we see that \(\displaystyle I_1(R)\) is less than
\begin{equation*}
 2C_{\lambda}^{2}\int_{\mathfrak{a}^+(1)}{\rm e}^{-2R\tau(H)}\,{\rm d}H,
\end{equation*}
from which we deduce that \(\displaystyle\lim_{R\rightarrow +\infty} I_1(R)=0\).\\
For \(I_2(R)\), we have
\begin{equation*}\begin{split}
I_2(R)&\leq \frac{2}{R^r}\mid c(\lambda,l)\mid^{2}\mid W\mid \int_{\mathfrak{a}^+(R)}
\mid u({\rm e}^{H)})^{-l}{\rm e}^{l\sum_{j=1}^r\alpha_j(H)}-1\mid^2\, {\rm d}H\\
&\leq 2\mid c(\lambda,l)\mid^{2}\mid W\mid\int_{\mathfrak{a}^+(1)}\prod_{j=1}^r(1-(1+{\rm e}^{-2R\alpha_j(H)})^{-l})^2\, {\rm d}H.
\end{split}\end{equation*}
By the Lebesgue 's dominated convergence theorem, we easily see that \(\displaystyle\lim_{R\rightarrow +\infty}I_2(R)=0\).
Therefore
\[\lim_{R\rightarrow +\infty}\frac{1}{R^r}\int_{B(R)}\mid \varphi_{\lambda,l}(g)-\tau_l^{-1}(k_1(g)k_2(g))\sum_{s\in W}c(s\lambda,l){\rm e}^{(is\lambda-\rho)(A^+(g))}\mid^2\, {\rm d}g_K=0,\]
and the proof of proposition \ref{asympt} is finished.
 \end{proof}
\begin{lemma}
If \(\lambda\in \mathfrak{a}^\ast_{\textrm{reg}}\) then the functions
\begin{equation*}
k\rightarrow \sum_j a_j{\rm e}^{(i\lambda-\rho)H(g_j^{-1}k)}\tau_l^{-1}(\kappa(g_j^{-1}k)),\, a_j\in{\C}  \, g_j\in G.
\end{equation*}
form a dense subspace \(H_{\lambda,l}\) of \(L^2(K,\tau_l))\).
\end{lemma}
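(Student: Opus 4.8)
The goal is to show that the span $H_{\lambda,l}$ of the functions $k\mapsto \mathrm{e}^{(i\lambda-\rho)H(g^{-1}k)}\tau_l^{-1}(\kappa(g^{-1}k))$, $g\in G$, is dense in $L^2(K,\tau_l)$. Since $H_{\lambda,l}$ is a linear subspace, it suffices to show that its orthogonal complement in $L^2(K,\tau_l)$ is trivial. So I would start by taking $f\in L^2(K,\tau_l)$ with
\[
\int_K \overline{f(k)}\,\mathrm{e}^{(i\lambda-\rho)H(g^{-1}k)}\tau_l^{-1}(\kappa(g^{-1}k))\,\mathrm{d}k=0\quad\text{for all }g\in G,
\]
and aim to conclude $f=0$.

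**Key step: recognize the pairing as a Poisson transform.** The crucial observation is that the left-hand integrand is, up to complex conjugation, exactly the kernel of the Poisson transform $P_{\lambda,l}$ from \eqref{Poisson}: indeed $\overline{\mathrm{e}^{(i\lambda-\rho)H(g^{-1}k)}}=\mathrm{e}^{(-i\lambda-\rho)H(g^{-1}k)}=\mathrm{e}^{-(i\bar\lambda+\rho)H(g^{-1}k)}$ since $\lambda$ is real, and $\overline{\tau_l^{-1}(\kappa(g^{-1}k))}=\tau_l(\kappa(g^{-1}k))$ as $\tau_l$ is unitary. Hence the vanishing condition says precisely that $P_{\lambda,l}\bar f\equiv 0$ on $G$, where $\bar f\in L^2(K,\tau_l)$ (more precisely $\bar f$ lies in the hyperfunction section space $\mathcal{B}(K,\tau_l)$, which contains $L^2(K,\tau_l)$). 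Now I invoke Shimeno's injectivity result, Theorem \cite{Sh1} quoted in the introduction: for $\lambda\in\mathfrak{a}^\ast_{\mathrm{reg}}$ one checks the two hypotheses — the condition $-2\langle i\lambda,\alpha\rangle/\langle\alpha,\alpha\rangle\notin\{1,2,\dots\}$ holds automatically because $\lambda$ is real and nonzero in each relevant direction so the inner products are purely imaginary, and $e_l(\lambda)\neq 0$ follows from the explicit product formula for $\mathbf{c}(\lambda,l)$ together with Lemma 3.1 (the $c$-function has no zeros for real regular $\lambda$, equivalently $\mathbf b(\lambda,l)\neq 0$). Therefore $P_{\lambda,l}$ is injective on $\mathcal{B}(K,\tau_l)$, so $\bar f=0$, hence $f=0$.

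**Alternative / self-contained route.** If one prefers not to lean on the full hyperfunction result, the same conclusion can be reached by a boundary-value argument: apply $P_{\lambda,l}\bar f=0$ and take the asymptotic expansion of the Poisson integral along $A^+$; the leading term recovers $\mathbf{c}(\lambda,l)\,\widehat{\bar f}(\lambda,\cdot)$ up to the Weyl-group symmetrization, and since $\mathbf{c}(\lambda,l)\neq 0$ for regular real $\lambda$ this forces all Fourier–Helgason components of $\bar f$ at the parameter $\lambda$ to vanish, giving $\bar f=0$ by completeness of the $K$-expansion. Either way, the substantive input is the nonvanishing of the $c$-function on $\mathfrak{a}^\ast_{\mathrm{reg}}$ (Lemma 3.1) plus injectivity of the Poisson transform.

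**Main obstacle.** The only delicate point is checking that Shimeno's hypotheses $\bigl(-2\langle i\lambda,\alpha\rangle/\langle\alpha,\alpha\rangle\notin\mathbb{Z}^+$ and $e_l(\lambda)\neq 0\bigr)$ are genuinely satisfied for every $\lambda\in\mathfrak{a}^\ast_{\mathrm{reg}}$ and every $l\in\mathbb{Z}$ — in particular $e_l(\lambda)\neq 0$, where $e_l(\lambda)^{-1}$ is the denominator of the $c$-function and could a priori vanish for integer $l$ at special real $\lambda$. This is handled by unwinding the explicit formula for $\mathbf{c}(\lambda,l)$ displayed before Lemma 3.1: its zeros come only from the $\Gamma$-factors in the denominator being infinite, which for purely imaginary arguments $i\lambda_j$ with $\lambda_j\neq 0$ does not happen, and the $\prod(\lambda_k^2-\lambda_j^2)$ in the denominator is compensated by $\pi_l$, exactly as recorded in Lemma 3.1's estimate $|\mathbf b(\lambda,l)|^{-1}\asymp\prod(1+|\lambda_j|^2)^{(b-\varepsilon(l)/2)/2}>0$. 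So the hypotheses hold, and the density follows. The rest of the argument is the routine duality reduction stated at the outset.
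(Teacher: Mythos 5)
Your argument is essentially the paper's own proof: the paper disposes of the lemma by noting that a real regular \(\lambda\) is simple and that density is just the duality reformulation of simplicity, i.e.\ of the injectivity of \(P_{\lambda,l}\) on \(\mathcal{B}(K,\tau_l)\) guaranteed by Shimeno's theorem, which is exactly your orthogonal-complement plus injectivity argument (including the verification that Shimeno's hypotheses hold for real regular \(\lambda\)). One small correction: \(\bar f\) obeys the covariance rule for \(\tau_{-l}\), not \(\tau_l\), so instead of writing the vanishing condition as \(P_{\lambda,l}\bar f\equiv 0\) you should simply conjugate the orthogonality identity to get \(P_{\lambda,l}f\equiv 0\); the conclusion \(f=0\) is unchanged.
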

\begin{proof}
Noting that if \(\lambda\in \mathfrak{a}^\ast_{\textrm{reg}}\), then \(\lambda\) is simple. The density is just a reformulation of the definition of the simplicity.
\end{proof}
For \(g\in G\), put \(\displaystyle f_g^\lambda (k)={\rm e}^{(i\lambda-\rho)H(g^{-1}k)}\tau^{-1}_l(\kappa(g^{-1}k)), \quad k\in K\). Then
using the symmetric formula for the elementary \(\tau_{-l}\)-spherical function( see \cite{Ca})
\begin{equation}\label{S}
\varphi_{\lambda,l}(h^{-1}g)=\int_K  {\rm e}^{(i\lambda-\rho)H(h^{-1}k)} \tau_{-l}(\kappa(h^{-1}k)){\rm e}^{-(i\lambda+\rho)H(g^{-1}k)} \tau_l(\kappa(g^{-1}k))\,{\rm d}k,
\end{equation}
and the simplicity of \(s\lambda\) (\(s\in W\) and \(\lambda\in \mathfrak{a}^\ast_{\textrm{reg}}\)) we can prove the following result in a similar manner to [Lemma 6.7 \cite{Ka}].
\begin{lemma}
Let \(s\in W, l\in {\Z}\) and \(\lambda\in \mathfrak{a}^\ast_{\textrm{reg}}\). Then there exists a unitary operator \(U_{s,\lambda}^l\) on \(L^2(K,\tau_l)\) such that \[U_{s,\lambda}^l f_g^\lambda (k)=f_g^{s\lambda} (k).\]
Moreover, for \(f_1,f_2\in L^2(K,\tau_l)\) we have \(P_{\lambda,l}f_1=P_{s\lambda,l}f_2\) if and only if \(U_{s,\lambda}^l f_1=f_2\).
\end{lemma}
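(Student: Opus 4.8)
The plan is to define $U^l_{s,\lambda}$ first on the dense subspace $H_{\lambda,l}$ spanned by the functions $f_g^\lambda$, and then extend it by continuity. The one substantive computation is the Gram matrix of this family: for $g,h\in G$, since $\lambda$ is real and $\tau_l$ is a unitary character, $\overline{f_h^\lambda(k)}=e^{-(i\lambda+\rho)H(h^{-1}k)}\tau_l(\kappa(h^{-1}k))$, so that
\[
\langle f_g^\lambda, f_h^\lambda\rangle_{L^2(K,\tau_l)}=\int_K e^{(i\lambda-\rho)H(g^{-1}k)}\tau_{-l}(\kappa(g^{-1}k))\,e^{-(i\lambda+\rho)H(h^{-1}k)}\tau_l(\kappa(h^{-1}k))\,{\rm d}k=\varphi_{\lambda,l}(g^{-1}h)
\]
by the symmetric formula (\ref{S}). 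I would then invoke the $W$-invariance of the elementary $\tau_{-l}$-spherical function in its spectral parameter, $\varphi_{w\lambda,l}=\varphi_{\lambda,l}$ for all $w\in W$ (immediate from the Harish-Chandra series expansion recalled in Section 4, first for generic $\lambda$ and then by continuity on $\mathfrak{a}^\ast_{\textrm{reg}}$), to conclude $\langle f_g^{s\lambda}, f_h^{s\lambda}\rangle=\varphi_{s\lambda,l}(g^{-1}h)=\varphi_{\lambda,l}(g^{-1}h)=\langle f_g^\lambda, f_h^\lambda\rangle$.

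Next I would use this identity of Gram matrices: for any finite family $(g_j)\subset G$ and scalars $(a_j)\subset\mathbb{C}$,
\[
\Big\|\sum_j a_j f_{g_j}^{s\lambda}\Big\|_2^2=\sum_{j,k}a_j\overline{a_k}\,\varphi_{s\lambda,l}(g_j^{-1}g_k)=\sum_{j,k}a_j\overline{a_k}\,\varphi_{\lambda,l}(g_j^{-1}g_k)=\Big\|\sum_j a_j f_{g_j}^{\lambda}\Big\|_2^2 .
\]
Hence the correspondence $\sum_j a_j f_{g_j}^\lambda\mapsto\sum_j a_j f_{g_j}^{s\lambda}$ is well defined (a vanishing combination on the left forces one on the right) and isometric on $H_{\lambda,l}$. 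Since $s\lambda\in\mathfrak{a}^\ast_{\textrm{reg}}$ as well, the density lemma above applies to $s\lambda$ and shows that $H_{s\lambda,l}$ is dense in $L^2(K,\tau_l)$; therefore the closure of this isometry is a surjective isometry, i.e. a unitary operator $U^l_{s,\lambda}$ on $L^2(K,\tau_l)$ with $U^l_{s,\lambda}f_g^\lambda=f_g^{s\lambda}$.

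For the equivalence, I would first record that $P_{\lambda,l}f(g)=\langle f, f_g^\lambda\rangle_{L^2(K,\tau_l)}$, since for real $\lambda$ the function $\overline{f_g^\lambda}$ is precisely the Poisson kernel of (\ref{Poisson}); in particular $P_{\lambda,l}$ is injective, because $P_{\lambda,l}f=0$ forces $f\perp H_{\lambda,l}=L^2(K,\tau_l)$. Now if $P_{\lambda,l}f_1=P_{s\lambda,l}f_2$, evaluating at $g\in G$ and using the unitarity of $U^l_{s,\lambda}$ gives $\langle f_1,f_g^\lambda\rangle=\langle f_2,f_g^{s\lambda}\rangle=\langle f_2,U^l_{s,\lambda}f_g^\lambda\rangle=\langle (U^l_{s,\lambda})^{-1}f_2,f_g^\lambda\rangle$ for every $g$, so $f_1-(U^l_{s,\lambda})^{-1}f_2$ is orthogonal to the dense subspace $H_{\lambda,l}$ and hence vanishes; this is exactly $U^l_{s,\lambda}f_1=f_2$. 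The converse is the same chain of equalities read backwards.

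The only step that needs input beyond formal Hilbert-space manipulations is the first one: recognizing the $L^2(K,\tau_l)$ Gram matrix of $\{f_g^\lambda\}$ as the values $\varphi_{\lambda,l}(g^{-1}h)$ and using the $W$-invariance of $\varphi_{\lambda,l}$ in $\lambda$. I expect the bookkeeping of complex conjugates (where the reality of $\lambda$ and the unitarity of $\tau_l$ enter) and the verification that the density lemma may be applied to $s\lambda$ as well to be the only places requiring a little care.
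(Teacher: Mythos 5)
Your argument is correct and is essentially the proof the paper intends: the paper itself only remarks that the lemma follows from the symmetric formula (\ref{S}) together with the simplicity of \(s\lambda\), ``in a similar manner to [Lemma 6.7, Kaizuka]'', and your Gram-matrix computation \(\langle f_g^\lambda,f_h^\lambda\rangle=\varphi_{\lambda,l}(g^{-1}h)\), the \(W\)-invariance of \(\varphi_{\lambda,l}\) in \(\lambda\), and the density of \(H_{\lambda,l}\) and \(H_{s\lambda,l}\) are exactly the ingredients of that argument. Writing \(P_{\lambda,l}f(g)=\langle f,f_g^\lambda\rangle\) and using density for the equivalence is likewise the intended route, so there is nothing to correct.
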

For the proof of Theorem \ref{asympt0} we shall need the following result.
\begin{lemma}
Let \(g,h\in G\) and put \(R(g,h)=A^+(h^{-1}g)-A^+(g)-H(h^{-1}\kappa_1(g))\).\\
(i) R(g,h) lies in the dual cone \(^{+}\overline{\mathfrak{a}}\) and there exists a positive constant \(C(h)\) bounded on each compact set in \(G\) such that
\begin{equation}\label{R}
\mid R(g,h)\mid\leq C(h) {\rm e}^{-2\tau(A^+(g))},
\end{equation}
for all \(g\in G\).\\
(ii) \begin{equation}\label{R1}
\mid {\rm e}^{(is\lambda-\rho)R(g,h)}-1\mid\leq C(\mid \lambda\mid+\mid\rho\mid)C(h){\rm e}^{-2\tau A^+(g)}.
\end{equation}
\end{lemma}
\begin{proof}
\begin{itemize}
\item[(i)] The proof of (\ref{R}) is essentially the same as [\cite{Ka},Lemma 5.2], so we omit it (see also [\cite{He}, chapter II,Proposition 4.24].
\item[(ii)] the estimate (\ref{R1}) follows from the  mean value theorem and inequality (\ref{R}) .
\end{itemize}
\end{proof} 
\textbf{Proof of Theorem \ref{asympt0}}
We shall first take the case when \(f=f^\lambda_h\), \(h\) being fixed in \(G\).\\
Noting that \(P_{\lambda,l}f^\lambda_h(g)=\varphi_{\lambda,l}(h^{-1}g)\) and \(U_{s,\lambda}f^\lambda_h(\kappa_1(g))= {\rm e}^{(is\lambda-\rho)H(h^{-1}\kappa_1(g))}\tau_l^{-1}\kappa(h^{-1}\kappa_1(g))\), it is enough to show that 
\begin{equation}\begin{split}\label{asymp4}
&\tau_l^{-1}(\kappa_1(h^{-1}g)\kappa_2(h^{-1}g))\sum_{s\in W}c(s\lambda,l){\rm e}^{(is\lambda-\rho)A^+(h^{-1}g)}\\
&\simeq \tau_l^{-1}(\kappa_2(g))\sum_{s\in W}c(s\lambda,l){\rm e}^{(is\lambda-\rho)[A^+(g)+H(h^{-1}\kappa_1(g)]}\tau_l^{-1}(\kappa(h^{-1}\kappa_1(g)), 
\end{split}\end{equation} 
by (\ref{asympt3}).\\
We replace \(A^+(h^{-1}g)\) by \(A^+(g)+H(h^{-1}\kappa_1(g))+R(g,h)\) and decompose the left hand side of (\ref{asymp4}) as
\begin{equation*}
\tau_l^{-1}(\kappa_2(g))\tau_l^{-1}(\kappa(h^{-1}\kappa_1(g))\sum_{s\in W}c(s\lambda,l){\rm e}^{(is\lambda-\rho)[A^+(g)+H(h^{-1}\kappa_1(g)]}+J(g),
\end{equation*}
where \begin{equation*}\begin{split}
&J(g)=\tau_l^{-1}(\kappa_1(h^{-1}g)\kappa_2(h^{-1}g))\sum_{s\in W}c(s\lambda,l){\rm e}^{(is\lambda-\rho)[A^+(g)+H(h^{-1}\kappa_1(g)]}[{\rm e}^{(is\lambda-\rho)R(g,h)}-1]\\
&+[\tau_l^{-1}(\kappa_1(h^{-1}g)\kappa_2(h^{-1}g))-\tau_l^{-1}(\kappa(h^{-1}\kappa_1(g)))\tau_l^{-1}(\kappa_2(g))]\sum_{s\in W}c(s\lambda,l){\rm e}^{(is\lambda-\rho)[A^+(g)+H(h^{-1}\kappa_1(g)]}.
\end{split}\end{equation*}
Thus we are left to show that \(\displaystyle\lim_{R\rightarrow +\infty}\frac{1}{R^r}\int_{B(R)}\mid J(g)\mid^2 {\rm d}g_K=0\).\\
Let us denote by \(J_1(g)\) (respectively \(J_2(g)\)) the first (respectively the second) term in the above sum.\\
Put \(C_h(\lambda)=(C(\mid \lambda\mid+\rho\mid)C(h))^2\mid W\mid\mid c(\lambda,l)\mid^2\). Then using (\ref{R1})  and noting that \(\int_K {\rm e}^{-2\rho(H(h^{-1}k)}\, {\rm d}k=1\), we get
\begin{equation*}
\frac{1}{R^r}\int_{B(R)}\mid J_1(R)\mid^2 {\rm d}g_K\leq C_h(\lambda)\frac{1}{R^r}\int_{\mathfrak{a}^+(R)}{\rm e}^{-4\tau(H)}{\rm d}H,
\end{equation*}
from which we deduce that \(\displaystyle\lim_{R\rightarrow +\infty} \frac{1}{R^r}\int_{B(R)}\mid J_1(R)\mid^2 {\rm d}g_K =0\).\\
To end the proof we will need the following result, the proof of which will be done in the Appendix.
\begin{lemma A2}\label{decomposition}
Let \(\displaystyle h,g\in G\). Then we have
\begin{itemize}
\item[(i)] \(\displaystyle\kappa_1(h^{-1}g)\kappa_2(h^{-1}g)=\kappa_1(h^{-1}\kappa_1(g){\rm e}^{A^+(g)})\kappa_2(h^{-1}\kappa_1(g){\rm e}^{A^+(g)})\kappa_2(g)\).
\item[(ii)] For $H\in \mathfrak{a}^{+}$,\(\displaystyle\lim_{R\rightarrow +\infty}\tau_l(\kappa_1(g{\rm e}^{RH})\kappa_2(g{\rm e}^{RH}))=\tau_l(\kappa(g))\).
\end{itemize}
\end{lemma A2}
Using (i) in Lemma \ref{decomposition} and making the transform \(\displaystyle R\rightarrow RH\) we easily see  that\\
\(\displaystyle\frac{1}{R^r}\int_{B(R)}\mid J_2(R)\mid^2 {\rm d}g_K\) is majorized by \begin{equation*}
\mid W\mid \mid c(\lambda,l)\mid^2\int_{K\times \mathfrak{a}^+(1)}\mid \tau_l^{-1}(\kappa_1(h^{-1}k{\rm e}^{RH})\kappa_2(h^{-1}k{\rm e}^{RH}))-\tau_l^{-1}(\kappa(h^{-1}k))\mid^2{\rm e}^{-2\rho H(h^{-1}k)}{\rm d}k{\rm d}H.
\end{equation*}
which in turn is less than
\begin{equation*}
\mid W\mid \mid c(\lambda,l)\mid^2 \sup_{k\in K}{\rm e}^{-2\rho H(h^{-1}k)}\int_{K\times \mathfrak{a}^+(1)}\mid \tau_l^{-1}(\kappa_1(h^{-1}k{\rm e}^{RH})\kappa_2(h^{-1}k{\rm e}^{RH}))-\tau_l^{-1}(\kappa(h^{-1}k))\mid^2{\rm d}k{\rm d}H.
\end{equation*}
It follows from (ii) in Lemma \ref{decomposition} that
\[\lim_{R\rightarrow +\infty}\int_{K\times\mathfrak{a}^+(1)}\mid \tau_l^{-1}(\kappa_1(h^{-1}k{\rm e}^{RH})\kappa_2(h^{-1}k{\rm e}^{RH}))-\tau_l^{-1}(\kappa(h^{-1}k))\mid^2{\rm d}H=0,
\]
by the Lebesgue's dominated convergence theorem. \\Thus \(\displaystyle\lim_{R\rightarrow +\infty}\frac{1}{R^r}\int_{B(R)}\mid J_2(R)\mid^2 {\rm d}g_K=0\) and therefore \(\displaystyle\lim_{R\rightarrow +\infty}\frac{1}{R^r}\int_{B(R)}\mid J(g)\mid^2 {\rm d}g_K=0\) as to be shown.\\
 Now we prove that (\ref{asympt5}) holds for \(f\in L^2(K,\tau_l)\). First of all we introduce the operator \(S_\lambda\)
defined for\(f\in L^2(K,\tau_l)\) by
\begin{equation*}
S_{\lambda,l} f(g)=\tau_l^{-1}(k^\prime)\sum_{s\in W} \mathbf{c}(s\lambda,l){\rm e}^{(is\lambda-\rho)(H)}U^l_{s,\lambda} f(k), \quad  g=k{\rm e}^H k^\prime.
\end{equation*}
 For any \(\varepsilon>0\) there exists \(\phi\in H_{\lambda,l}\) such that \(\parallel f-\phi\parallel_2\leq \epsilon\). We have
\begin{equation*}\begin{split}
\frac{1}{R^r}\int_{B(R)}\mid P_{\lambda,l}f(g)-S_{\lambda,l}f(g)\mid^2{\rm d}g_K\leq 3 &(\frac{1}{R^r}\int_{B(R)}\mid P_{\lambda,l}(f-\phi)(g)\mid^2{\rm d}g_K\\
&+\frac{1}{R^r}\int_{B(R)}\mid (P_{\lambda,l}\phi-S_{\lambda,l}\phi)(g)\mid^2{\rm d}g_K\\
&+\frac{1}{R^r}\int_{B(R)}\mid S_{\lambda,l}(f-\phi)(g)\mid^2{\rm d}g_K).
\end{split}\end{equation*}
From (\ref{P.uniform}) we see that the first term of the right hand side of the above inequality is less than
\(C\mid c(\lambda,l)\mid^2\varepsilon^2\).
Also, the (i) part  implies
\begin{equation*}
\lim_{R\rightarrow +\infty}\frac{1}{R^r}\int_{B(R)}\mid (P_{\lambda,l}\phi-S_{\lambda,l}\phi)(g)\mid^2{\rm d}g_K=0.
\end{equation*}
Next, by the unitarity of \(U_{s,\lambda}^l\) we easily see that 
\begin{equation*}\begin{split}
\frac{1}{R^r}\int_{B(R)}\mid S_{\lambda,l}(f-\phi)(g)\mid^2{\rm d}g_K &\leq \mid c(\lambda,l)\mid^2 \mid W\mid\sum_{s\in W}\frac{1}{R^r}\int_{K\times\mathfrak{a}^+(R)} | U_{s,\lambda}^l(f-\phi)(k)|^2 {\rm d}H{\rm d}k\\
&\leq \mid c(\lambda,l)\mid^2 \mid W\mid^2\varepsilon^2.
\end{split}\end{equation*}
Therefore
\begin{align*}
\lim_{R\rightarrow +\infty}\frac{1}{R^r}\int_{B(R)}\mid S_{\lambda,l}(f-\phi)(g)\mid^2{\rm d}g_K\leq \mid c(\lambda,l)\mid^2 \mid W\mid^2\varepsilon^2,
\end{align*}
which implies that (\ref{asympt5}) holds for any \(f\in L^2(K,\tau_l)\) and the proof of Theorem \ref{asympt0} is completed.
\section{The \(L^2\)-range of the Poisson transform}
As preparation to the proof of Theorem\ref{main R}  we prove the following:
\begin{proposition}\label{Poisson norm}
Let \(l\in{\Z}\) and  \(\lambda\in \mathfrak{a}^\ast_{\textit{reg}}\). For \(f\in L^2(K,\tau_l)\) we have
\begin{equation}\label{N}
\lim_{R\rightarrow +\infty}\frac{1}{R^r}\int_{B(R)}\mid P_{\lambda,l}f(g)\mid^2\,{\rm d}g_K=2^{-r/2}\Gamma(r/2+1)^{-1}\mid c(\lambda,l)\mid^2  \left\|f\right\|^2_2.
\end{equation}
\end{proposition}
\begin{proof}
Let \(f\in L^2(K,\tau_l)\). We have
\begin{eqnarray*}
\dfrac{1}{R^{r}}\int_{B(R)}|\mathcal{P}_{\lambda,l}f(g)|^{2} {\rm d}g_K&=&\dfrac{1}{R^{r}}\int_{B(R)}|\mathcal{P}_{\lambda,l}f(g)-S_{\lambda,l}f(g)|^{2}{\rm d}g_K+\dfrac{1}{R^{r}}\int_{B(R)}|S_{\lambda,l}f(g)|^2{\rm d}g_K\\
&+&\dfrac{2}{R^{r}}\int_{ B(R)}Re\left((\mathcal{P}_{\lambda,l}f(g)-S_{\lambda,l}f(g))\overline{S_{\lambda,l}f(g)} \right){\rm d}g_K \\
\end{eqnarray*}
The first term in the right hand side of the above identity goes to \(0\) as \(R\rightarrow +\infty\), by  Theorem \ref{asympt0}.\\
By using the Schwartz inequality we easily see that the third term goes to \(0\) as  \(R\) goes to \( +\infty\), since 
\(\displaystyle\sup_{R>1}\dfrac{1}{R^{r}}\int_{B(R)}|S_{\lambda,l}(f)(g)|^{2}dg_K<+\infty \).\\
For the second term, by the unitarity of \(U_{s,\lambda}\) we have
\begin{equation*}\begin{split}
&\dfrac{1}{R^{r}}\int_{B(R)}|S_{\lambda,l}f(g)|^{2}{\rm d}g_K=|c(\lambda,l)|^{2}\left\|f\right\|^2_2 |W|\dfrac{1}{R^{r}}\int_{\mathfrak{a}^+(R)}e^{-2\rho(H)}\Delta(e^{H})dH+\\
&+\sum_{s_{1},s_{2} \in W; s_1\neq s_2}c(s_{1}\lambda,l)\overline{c(s_{2}\lambda,l)}\left(U^{l}_{s_{1},\lambda}f,U^{l}_{s_{2},\lambda}f\right)_{L^{2}(K)}\times J(R),
\end{split}\end{equation*}
where
\begin{equation*}
J(R)= \int_{\mathfrak{a}^+(1)}e^{iR(s_{1}\lambda-s_{2}\lambda)(H)}\Delta(e^{RH})e^{-2R\rho(H)}dH.
\end{equation*}
To finish the proof, it is enough  to show that \[\lim_{R\rightarrow +\infty}J(R)=0 \quad \textit{and} \quad \lim_{R\rightarrow +\infty} \dfrac{1}{R^{r}}|W|\int_{\mathfrak{a}(R)}e^{-2\rho(H)}\Delta(e^{H})dH=2^{-r/2
}\Gamma(r/2+1)^{-1}.\]
We write \(J(R)=J_1(R)+J_2(R)\), with
\begin{eqnarray*}
J_1(R)=\int_{\mathfrak{a}^+(1)}e^{iR(s_{1}-s_{2})\lambda(H)}dH,
\end{eqnarray*}
and
\begin{eqnarray*}J_2(R)=\int_{\mathfrak{a}^+(1)}e^{iR(s_{1}\lambda-s_{2}\lambda)(H)}(\Delta(e^{RH})e^{-2R\rho(H)}-1)dH.
\end{eqnarray*}
Since \(s_1\lambda\neq s_2\lambda\) because \(s_1\neq s_2\), we have \(\lim_{R\rightarrow +\infty}J_1(R)=0\), by the Riemann-Lebesgue lemma.\\
We also have
\begin{equation*}\begin{split}
 \int_{\mathfrak{a}^+(1)}|\Delta(e^{RH})e^{-2R\rho(H)}-1|dH
 &\leq\int_{\mathfrak{a}^+(1)}|\prod_{\alpha\in \Sigma^{+}}(1+e^{-2R\alpha(H)})^{m_{\alpha}}-1|dH,
\end{split}\end{equation*}
which implies that  \[\displaystyle\lim_{R\rightarrow +\infty}J_{2}(R)=0\]
and 
\[\lim_{R\rightarrow +\infty} \dfrac{1}{R^{r}}|W|\int_{\mathfrak{a}^+(R)}e^{-2\rho(H)}\Delta(e^{H})dH=2^{-r/2
}\Gamma(r/2+1)^{-1}.\]
Therefore  \(\displaystyle\lim_{R\rightarrow +\infty}J(R)=0\) as to be shown and the proof is completed.
\end{proof} 
Let \(\widehat{K}\) be the set of  equivalence classes of  irreducible unitary representations of \(K\). For each \(\delta\in \widehat{K}\) let \(V_\delta\) be a representation space for \(\delta\). Set
\begin{equation*}
V_\delta^l=\{v\in V_\delta; \delta(m)v=\tau_l(m)v\quad  \textrm{for all}\quad  m\in M \},
\end{equation*}
and \(\widehat{K}_l=\{\delta\in \widehat{K};  V^l_\delta\neq 0\}\).\\ We choose an orthonormal base \(\{ v_1,...,v_{d(\delta)} \}\) of \(V_\delta\) with respect to the unitary inner product \(<,>\) of \(V_\delta\) so that the first \(m(\delta)\)-vectors form a basis of \(V_\delta^l\), where \(d(\delta)\) is the dimension of \(V_\delta\).\\
Let \(L^2_\delta(K,\tau_l)\) denote the subspace of \(L^2(K,\tau_l))\) consisting of the \(K\)-finite \\ functions which are of type \(\delta\). Then the functions
\begin{equation*}
f^\delta_{ij}(k)=\sqrt{d(\delta)}<v_j,\delta(k)v_i>,\quad 1\leq j\leq d(\delta),\quad  1\leq i\leq m(\delta),
\end{equation*}
form an orthonormal base of \(L^2_\delta(K,\tau_{l_{\mid M}})\) and
\[
\{f^\delta_{ij}; \delta\in \widehat{K}_l, \quad 1\leq j\leq d(\delta),\quad  1\leq i\leq m(\delta)\}
\]
is an orthonormal base of \(L^2(K,\tau_l))\).\\
Let \(f\in L^2(K,\tau_l)\). Define the operator \(A_\delta\in Hom(V_\delta,V_\delta^l)\) by  \(A_\delta=\int_K f(k)\delta(k)dk\). Then \(f\) has the Fourier series expansion 
\[f(k)=\sum_{\delta\in \widehat{K}_l}d(\delta) Tr(A_\delta\delta(k^{-1})),\]
and we have the Plancherel formula
\[ \left\|f\right\|_2^2 = \sum_{\delta \in \widehat{K}_l}d(\delta)\parallel A_\delta\parallel^2_{\textit{HS}},\]
where \(\parallel \parallel_{HS}\) stands for the Hilbert-Schmidt norm.\\
Consider the Eisenstein integral
\begin{equation*}
\Phi_{\lambda,\delta}^l(g)=\int_K{\rm e}^{(i\lambda-\rho)H(g^{-1}k)}\tau_{-l}(\kappa(g^{-1}k))\delta(k){\rm d}k.
\end{equation*}
Then \(\Phi_{\lambda,\delta}^l\) maps \(G\) into \( {\rm Hom}(V_\delta,V_\delta)\) and satisfies
\begin{equation*}
\Phi_{\lambda,\delta}^l(kgh)=\delta(k)\Phi_{\lambda,\delta}^l(g)\tau_l(h),
\end{equation*}
for all  \(k,h\in K, g\in G\).
\begin{lemma}
Let  \(\lambda \in \mathfrak{a}^{*}_{reg}\) ,\(\delta \in \widehat{K_l}\).\\
(i) For \(v_1,v_2\in V_\delta\) and \(w_1,w_2 \in V_{\delta}^{l}\), we have
\begin{equation}\label{asymp1}
\lim_{R\rightarrow +\infty}\dfrac{1}{R^{r}}\int_{B(R)}<v_1,\Phi_{\lambda,\delta}(g)w_1>\overline{<v_1,\Phi_{\lambda,\delta}(g)w_1>}\,{\rm d}g_K=C_r^2|c(\lambda,l)|^{2}\dfrac{<v_1,v_2><w_1,w_2>}{d(\delta)}.
\end{equation}
where \(C_r=2^{-r/4}\Gamma(r/2+1)^{-1/2}\).\\
(ii) Define  \(\displaystyle I_R(\lambda,\delta)\in End(V_\delta)\) by 
\begin{align*}
I_R(\lambda,\delta)=\frac{C_r^{-2}|c(\lambda,l)|^{-2}}{R^r}\int_{B(R)}\Phi_{\lambda,\delta}^l(g)^{*}\Phi_{\lambda,\delta}^l(g){\rm d}g_K.
\end{align*}
Then we have 
\begin{equation}\label{asymp2}
\lim_{R\rightarrow +\infty} I_R(\lambda,\delta)=Id_{V_\delta^l}, \quad \textit{on} \quad V_\delta^l.
\end{equation}
Moreover there exists a positive constant \(C\) such that for \(\lambda \in \mathfrak{a}^{*}_{reg}, \delta \in \widehat{K_l}\) and \(R>1\) we have
\[\parallel I_R(\lambda,\delta)\parallel\leq C,\]
where \(\parallel \, \parallel\) stands for the operatorial norm.
\end{lemma}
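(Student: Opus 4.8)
The plan is to realise every matrix entry $\langle\Phi^l_{\lambda,\delta}(g)w,v\rangle$ as a Poisson transform $P_{\lambda,l}$ of an explicit $K$-finite function, and then to read (i)--(ii) off from the norm asymptotics \eqref{N}, the continuity estimate \eqref{P.uniform} of Corollary \ref{cor1}, and Schur orthogonality on $K$.

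First I would fix $v\in V_\delta$, $w\in V_\delta^l$ and set $h_{v,w}(k)=\overline{\langle\delta(k)w,v\rangle}$. Since $\delta(m)w=\tau_l(m)w$ for $m\in M$, one checks $h_{v,w}(km)=\tau_l(m)^{-1}h_{v,w}(k)$, so $h_{v,w}\in L^2(K,\tau_l)$, and Schur orthogonality gives $\|h_{v,w}\|_2^2=d(\delta)^{-1}\langle v,v\rangle\langle w,w\rangle$ and, more generally, $\langle h_{v_j,w},h_{v_j,w'}\rangle_{L^2(K)}=d(\delta)^{-1}\langle w,w'\rangle$ for $w,w'\in V_\delta^l$. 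Taking complex conjugates in the integral defining $\Phi^l_{\lambda,\delta}$ and using that $\lambda$ and $\rho$ are real (so $\overline{{\rm e}^{-(i\lambda+\rho)H}}={\rm e}^{(i\lambda-\rho)H}$ and $\overline{\tau_l(\kappa)}=\tau_{-l}(\kappa)$) gives the key identity $\langle\Phi^l_{\lambda,\delta}(g)w,v\rangle=\overline{P_{\lambda,l}h_{v,w}(g)}$, whence $|\langle\Phi^l_{\lambda,\delta}(g)w,v\rangle|=|P_{\lambda,l}h_{v,w}(g)|$. With this, (i) is immediate: applying \eqref{N} to $h_{v,w}$ yields $\lim_{R\to\infty}R^{-r}\int_{B(R)}|\langle\Phi^l_{\lambda,\delta}(g)w,v\rangle|^2\,dg=\gamma_r^2|c(\lambda,l)|^2\|h_{v,w}\|_2^2=\gamma_r^2|c(\lambda,l)|^2 d(\delta)^{-1}\langle v,v\rangle\langle w,w\rangle$.

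For (ii) I would first observe that $\Phi^l_{\lambda,\delta}(g)=\Phi^l_{\lambda,\delta}(g)\circ E_l$, where $E_l=\int_M\tau_l(m)^{-1}\delta(m)\,dm$ is the orthogonal projection onto $V_\delta^l$ (averaging the $M$-covariance of the kernel over $M$); hence $I_{\lambda,\delta,R}$ annihilates $(V_\delta^l)^\perp$ and it suffices to evaluate $\langle I_{\lambda,\delta,R}w,w'\rangle$ for $w,w'\in V_\delta^l$. Expanding in an orthonormal basis $\{v_j\}_{j=1}^{d(\delta)}$ of $V_\delta$ gives $\langle I_{\lambda,\delta,R}w,w'\rangle=\sum_{j=1}^{d(\delta)}R^{-r}\int_{B(R)}P_{\lambda,l}h_{v_j,w'}(g)\,\overline{P_{\lambda,l}h_{v_j,w}(g)}\,dg$. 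Polarising \eqref{N} via the identity $a\bar b=\tfrac{1}{4}\sum_{m=0}^{3}i^m|a+i^m b|^2$ and the linearity of $P_{\lambda,l}$ shows that the sesquilinear limit $\lim_{R\to\infty}R^{-r}\int_{B(R)}P_{\lambda,l}f_1\,\overline{P_{\lambda,l}f_2}\,dg=\gamma_r^2|c(\lambda,l)|^2\langle f_1,f_2\rangle_{L^2(K)}$ exists, and applying it term by term together with the Schur identity above yields $\langle I_{\lambda,\delta,R}w,w'\rangle\to\sum_{j=1}^{d(\delta)}\gamma_r^2|c(\lambda,l)|^2 d(\delta)^{-1}\langle w,w'\rangle=\gamma_r^2|c(\lambda,l)|^2\langle w,w'\rangle$, i.e. $I_{\lambda,\delta,R}\to\gamma_r^2|c(\lambda,l)|^2\,{\rm Id}_{V_\delta^l}$.

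For the operator-norm bound, which is not contained in the pointwise limits, I would use \eqref{P.uniform}: each term is $\le R^{-r}\int_{B(R)}|P_{\lambda,l}h_{v_j,w}(g)|^2\,dg\le C_l^2|c(\lambda,l)|^2\|h_{v_j,w}\|_2^2=C_l^2|c(\lambda,l)|^2 d(\delta)^{-1}\langle w,w\rangle$, so summing over $j$ gives $\langle I_{\lambda,\delta,R}w,w\rangle\le C_l^2|c(\lambda,l)|^2\langle w,w\rangle$ and hence $\|I_{\lambda,\delta,R}\|\le C_l^2|c(\lambda,l)|^2$ for all $\delta\in\widehat{K}_l$ and $R>1$; thus $C=C_l^2|c(\lambda,l)|^2$ works, uniformly in $\delta$ and $R$. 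The one point that needs care is (ii): the trace of $I_{\lambda,\delta,R}$ alone already follows from (i), but to recognise the limit as a scalar multiple of ${\rm Id}_{V_\delta^l}$ one genuinely needs the off-diagonal matrix entries --- that is, the polarised form of \eqref{N} and the exact Schur constant $d(\delta)^{-1}$ --- since $I_{\lambda,\delta,R}$ is not $\delta(K)$-equivariant and could a priori converge to a non-scalar operator with the correct trace. Everything else is routine bookkeeping.
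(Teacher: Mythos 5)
Your argument is correct, and it splits naturally against the paper. For part (i) you are doing exactly what the paper does: its proof sets \(F^\delta_{v,w}(k)=\langle v,\delta(k)w\rangle\) (which is literally your \(h_{v,w}\)), identifies the matrix entry of \(\Phi^l_{\lambda,\delta}\) with a Poisson transform of this \(K\)-finite function, and concludes from (\ref{N}) plus Schur orthogonality; your extra care with the conjugations is just bookkeeping. For part (ii) the paper gives no proof at all (it only says the statement can be proved as in Lemma 7.2 of \cite{Ka} and omits it), so here you are supplying the missing argument rather than paralleling one, and what you supply is sound: the \(M\)-average \(E_l=\int_M\tau_l(m)^{-1}\delta(m)\,dm\) is indeed the orthogonal projection onto \(V^l_\delta\) and satisfies \(\Phi^l_{\lambda,\delta}(g)=\Phi^l_{\lambda,\delta}(g)E_l\) (substitute \(k\mapsto km\) in the Eisenstein integral and average), the polarized form of (\ref{N}) is legitimate because \(P_{\lambda,l}\) is linear and (\ref{N}) holds for every \(f\in L^2(K,\tau_l)\), and in finite dimensions entrywise convergence on \(V^l_\delta\) together with the vanishing on \((V^l_\delta)^\perp\) gives the operator limit; your point that the trace coming from (i) alone would not identify the limit as a scalar operator, so the off-diagonal entries are genuinely needed, is precisely the right observation.

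One remark on the final norm bound. What your proof yields is \(\parallel I_{\lambda,\delta,R}\parallel\leq C_l^2\mid c(\lambda,l)\mid^2\), uniform in \(\delta\) and \(R>1\) but not in \(\lambda\). Read literally, the lemma asserts a constant independent of \(\lambda\in\mathfrak{a}^\ast_{\mathrm{reg}}\), which cannot be true: by (\ref{asymp2}) one has \(\sup_{R>1}\parallel I_{\lambda,\delta,R}\parallel\geq\gamma_r^2\mid c(\lambda,l)\mid^2\), and \(\mid c(\lambda,l)\mid\) is unbounded on \(\mathfrak{a}^\ast_{\mathrm{reg}}\) near the Weyl walls. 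The bound the paper actually invokes later is \(\parallel A_{\delta,R}\parallel_{HS}\leq C_l\mid c(\lambda,l)\mid^2\parallel A_\delta\parallel_{HS}\), i.e.\ exactly the \(\lambda\)-dependent version you prove from (\ref{P.uniform}), so your reading of the statement is the correct and usable one.
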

\begin{proof}
(i) Noting that \(<v_i,\Phi_{\lambda,\delta}(g)w_i>= P_{\lambda,l}(<v_i,\delta()>)(g) (i=1,2)\), it follows from Proposition \ref{Poisson norm} and Schur  orthogonality  relations that 
\begin{equation*}\begin{split}
\lim_{R\rightarrow +\infty}\dfrac{1}{R^{r}}&\int_{B(R)}<v_1,\Phi_{\lambda,\delta}(g)w_1>\overline{<v_1,\Phi_{\lambda,\delta}(g)w_1>}\,{\rm d}g_K\\
&=C_r|c(\lambda,l)|^{2}\int_K <v_1,\delta(k)w_1>\overline{<v_2,\delta(k)w_2>}\, {\rm d}k\\
&=C_r|c(\lambda,l)|^{2}\frac{<v_1,v_2>\overline{<w_1,w_2>}}{d_\delta}
\end{split}\end{equation*}
(ii) Let \(v,w\in V_\delta^l\). It follows from (\ref{asymp1}) that 
\begin{equation*}\begin{split}
\lim_{R\rightarrow +\infty}<I_R(\lambda,\delta)u,v>&=\lim_{R\rightarrow +\infty}\frac{C_r^{-2}|c(\lambda,l)|^{-2}}{R^r}\sum_{j=1}^{d_\delta}\int_{B(R)}<v_j,\Phi_{\lambda,\delta}(g)w>\overline{<v_j,\Phi_{\lambda,\delta}(g)v>}\, {\rm d}g_K\\
&=\sum_{j=1}^{d_\delta}\frac{\overline{<w,v>}}{d_\delta}\\
&=<v,w>,
\end{split}\end{equation*}
and the result follows.
\end{proof}
We note that (\ref{asymp2}) implies 
\begin{align}\label{asympt1}
\lim_{R\rightarrow +\infty}\frac{C_r^{-2}|c(\lambda,l)|^{-2}}{R^r}\int_{B(R)}\Phi_{\lambda,\delta}^l(a)^{*}\Phi_{\lambda,\delta}^l(a)\Delta(a){\rm d}a=Id_{V_\delta^l}, \quad \textit{on} \quad V_\delta^l.
\end{align}
\textbf{Proof of theorem \ref{main R}}
\begin{proof}
(i) The right hand side of the estimates (\ref{estimate0}) has already been proved, see \\corollary \ref{cor1}. On the other hand the inequality on the left hand side of (\ref{estimate0}) is a direct consequence of (\ref{N}).\\
To prove the assertion (ii), we have only to show that \(P_{\lambda,l}\)  maps \(L^2(K,\tau_l))\) onto \(\mathcal{E}^2_{\lambda,l}(G)\).\\
Let \(\displaystyle F\in \mathcal{E}^2_{\lambda,l}(G)\). By Theorem \ref{Shimeno}, we know that \(F=P_{\lambda,l}f\) for some \(f\in B(K,\tau_l)\). Let \(\displaystyle f(k)=\sum_{\delta \in \widehat{K}_l}d(\delta) Tr(A_\delta\delta(k^{-1}))\) be its Fourier series. Then
\[F(g)=\sum_{\delta\in \widehat{K}_l}d(\delta)Tr(A_\delta\Phi_{\lambda,\delta}^l(g)^\ast).\]
From the assumption \(\parallel F\parallel_\ast<\infty\) we have 
\begin{equation*}\begin{split}
\infty>\parallel F\parallel^2_\ast&=\sup_{R>1} \frac{1}{R^r}\int_{B(R)}\mid F(g)\mid^2\, {\rm d}g_K\\
&=\sup_{R>1}\frac{1}{R^r}\int_{K\times\mathfrak{a}^+(R)}\mid F(ka)\mid^2\Delta(a)\, {\rm d}k\,{\rm d}a\\
&=\sup_{R>1}\sum_{\delta \in \widehat{K}_l}d(\delta)\frac{1}{R^r}\int_{\mathfrak{a}^+(R)}\parallel A_\delta\Phi_{\lambda,\delta}(a)^\ast\parallel_{HS}^2\Delta(a)\,{\rm d}a.
\end{split}\end{equation*}
Then, if \(\Lambda\) is an arbitrary finite subset of \(\widehat{K}_l\), we get
\begin{align*}
\sum_{\delta \in \Lambda}d(\delta)\frac{1}{R^r}\int_{\mathfrak{a}^+(R)}\parallel A_\delta\Phi_{\lambda,\delta}(a)^\ast\parallel_{HS}^2\Delta(a)\,{\rm d}a\leq \parallel F\parallel^2_\ast,
\end{align*}
for every \(R>1\) and from (\ref{asympt1}) it follows that 
\begin{align*}
C_r^2|c(\lambda,l)|^2\sum_{\delta \in \Lambda}d(\delta)\parallel A_\delta\parallel_{HS}^2\leq \parallel F\parallel^2_\ast, 
\end{align*}
which implies that \(\displaystyle f\in L^2(K,\tau_l)\) and that \(\displaystyle C_r^2|c(\lambda,l)|^2\parallel f\parallel^{2}_2\leq \parallel F\parallel^2_\ast\).\\
(iii) Now we turn to the proof of the \(L^2\)-inversion formula.\\
 Let \(F\in \mathcal{E}^2_{\lambda,l}(G)\). By the second part of Theorem \ref{main R}, we know that there exists a unique \(f\) in \(L^{2}(K,\tau_{l})\) such that  \(F=\mathcal{P}_{\lambda,l}f\). Hence, if  \( f(k)=\sum_{\delta \in \widehat{K}_l}d(\delta)Tr(A_\delta\delta(k^{-1})\) is its Fourier series expansion, with \(\displaystyle A_\delta=\int_K f(k)\delta(k)\, {\rm d}k\), then
\begin{align}\label{Fourier}
F(ka)=\sum_{\delta \in \widehat{K}_l}d(\delta)Tr(A_\delta\Phi_{\lambda,\delta}^\ast(a))\delta(k^{-1})
\end{align}
For \(R>1\) we define the \({\C}\)-valued function \(f_R\) on \(K\) by 
\[f_{R}(k)=\dfrac{C_r^{-2}\mid c(\lambda,l)\mid^{-2}}{R^{r}}\int_{B(R)}e^{(i\lambda-\rho)H(g^{-1}k)}\tau_{-l}(\kappa(g^{-1}k))F(g)dg_K.\]
To prove that \(\displaystyle f_R \rightarrow f\) on \(L^2(K,\tau_l)\) as \(R\rightarrow \infty\), it is enough to establish 
\( \displaystyle \lim_{R\rightarrow +\infty}\left\|f_{R}\right\|_2=\left\|f\right\|_2 \), since 
\[\displaystyle <f,f>=C_{r}^{-2}|c(\lambda,l)|^{-2}\lim_{R\rightarrow +\infty}<f_{R},f>,\] by  (\ref{N}).\\
Let \(\displaystyle f_R(k)=\sum_{\delta \in \widehat{K}_l}d(\delta)tr(A_{\delta,R}\delta(k^{-1}))\) is the Fourier series expansion of \(f_R\) in \(L^2(K,\tau_l)\), with \(\displaystyle A_{\delta,R}=\int_{K}f_{R}(k)\delta(k)dk\).
It follows from (\ref{Fourier}) that 
\begin{equation*}\begin{split}
A_{\delta,R}&=\frac{C_{r}^{-2}|c(\lambda,l)|^{-2}}{R^r}\int_{B(R)}F(g)\Phi_{\lambda,\delta}(g)\, {\rm d}g_K\\
&=\frac{C_{r}^{-2}|c(\lambda,l)|^{-2}}{R^r}\int_{\mathfrak{a}^+(R)\times K}F(ka)\delta(k)\Phi_{\lambda,\delta}(a)\Delta(a)\,{\rm d}k\,{\rm d}a\\
&=\frac{C_{r}^{-2}|c(\lambda,l)|^{-2}}{R^r}A_\delta\int_{\mathfrak{a}^+(R)}\Phi_{\lambda,\delta}(a)^\ast\Phi_{\lambda,\delta}(a)\Delta(a)\,{\rm d}a
\end{split}\end{equation*}
using once again (\ref{asymp1}) we see that \[ A_{\delta,R}\rightarrow A_\delta \quad \textit{on}\quad  V_\delta^l, \quad as \quad R\rightarrow \infty.\]
Next, since  \(\displaystyle \parallel A_{\delta,R}\parallel_{HS} \leq C \parallel A_\delta\parallel_{HS}\) and noting that the series \(\sum_{\delta\in \widehat{K}_l}\parallel A_\delta\parallel_{HS}\) is convergent we conclude that \[ \lim_{R\rightarrow \infty}\parallel f_R\parallel_2=\parallel f\parallel_2.\] This ends the proof of Theorem \ref{main R}.
\end{proof}
\section{APPENDIX}
We first review some facts on  Jacobi functions. We refer to \cite{Kr} for more details.\\
For \(\alpha,\beta, \lambda \in {\C}\), \(\alpha \neq -1-2,\cdots\), the Jacobi functions of the first kind \(\displaystyle\phi^{\alpha,\beta}_\lambda(t)\) is the solution of the differential equation 
\begin{equation}\label{Jacobi equation}
\frac{d^2}{dt^2}\phi_\lambda^{\alpha,\beta}+[2(\alpha+1)\coth t+(2\beta+1)\tanh t]\frac{d}{dt}\,\phi_\lambda^{\alpha,\beta}=-(\lambda^2+(\alpha+\beta+1)^2)\phi_\lambda^{\alpha,\beta},
\end{equation}
which satisfied \(\phi_\lambda^{\alpha,\beta}(0)=1 \) and \(\frac{d}{dt}\phi_\lambda^{\alpha,\beta}(t)_{\mid t=0} =0 \).
The Jacobi functions can be expressed by the Gauss Hypergeometric functions as 
\begin{align*}
\phi^{(\alpha,\beta)}_\lambda(t)=F(\frac{i\lambda+\alpha+\beta}{2},\frac{-i\lambda+\alpha+\beta}{2}; \alpha+1; -\sinh^2 t).
\end{align*}
For \(\lambda\notin -i{\N}\), another solution \(\Phi_\mu^{\alpha,\beta}\) of (\ref{Jacobi equation}) such that
\begin{equation}\label{Jacobi asymp}
\Phi_\mu^{\alpha,\beta}(t)={\rm e}^{(i\mu-\alpha-\beta-1)t}(1+\circ(1)) \quad \textit{as} \,\, t\rightarrow +\infty,
\end{equation}
is given by
\begin{equation*}\label{Jacobi2}
\Phi_\lambda^{\alpha,\beta}(t)=(2\sinh t)^{i\lambda-\alpha-\beta-1}F(\frac{-i\lambda+\alpha+\beta+1}{2},\frac{-i\lambda-\alpha+\beta+1}{2}, 1-i\lambda; -\sinh^{-2} t).
\end{equation*} 
For \(\lambda\notin {\Z}\), \(\Phi_\lambda^{\alpha,\beta}\) and \(\Phi_{-\lambda}^{\alpha,\beta} \) are linearly independent solutions of (\ref{Jacobi equation}), so we have
\begin{equation*}\label{Jacobi3}
\phi_\lambda^{\alpha,\beta}(t)=c_{\alpha,\beta}(\lambda)\Phi_\lambda^{\alpha,\beta}(t)+c_{\alpha,\beta}(-\lambda)\Phi_{-\lambda}^{\alpha,\beta}(t),
\end{equation*}
where
\begin{equation*}\label{J4}
c_{\alpha,\beta}(\lambda)=\frac{2^{\alpha+\beta+1-i\lambda}\Gamma(\alpha+1)\Gamma(i\lambda)}{\Gamma(\frac{\alpha+\beta+1+i\lambda}{2})\Gamma(\frac{\alpha-\beta+1+i\lambda}{2})}.
\end{equation*}
We shall also need some estimates on the Jacobi functions from \cite{Kr}, but which will be stated here for \(\alpha,\beta, \lambda\in {\R}, \alpha>-\frac{1}{2}\).
\begin{lemma}\cite{Kr}
Assume \(\alpha,\beta\) with \(\alpha>-\frac{1}{2}\).
 \enumerate
\item[(i)] For each  and \(\delta>0\) there exists a positive constant \(C\) such that for all \(t\geq \delta\) and all \(\lambda\in{\R}\)
\begin{align}\label{A1}
\mid \Phi_{\lambda}^{\alpha,\beta}(t)\mid\leq C{\rm e}^{-(\alpha+\beta+1)t}.
\end{align}
\item[(ii)] There exists a positive constant \(C\) such that for all \(t\geq 0\) and all \(\lambda\in{\R}\) 
\begin{align}\label{A2}
\mid \frac{d^n}{dt^n}\phi_\lambda^{\alpha,\beta}(t)\mid\leq C(1+\mid \lambda\mid)^n(1+t){\rm e}^{-(\alpha+\beta+1)t}.
\end{align}
\end{lemma}  
\begin{lemma A1}
For \(\alpha ,\beta \in \R \) with \(\alpha>-\frac{1}{2}\)  and \(n\in \N\) there exists a positive constant \(C\) such that for all \(t\geq 0\) and \(\lambda\in {\R}\setminus\{0\}\)
\[\mid \lambda\dfrac{d^{n}}{dt^{n}}\phi_{\lambda}^{\alpha,\beta}(t)\mid\leq C(1+\lambda^{2})^{n+1}e^{-(\alpha+\beta+1) t}\]
\end{lemma A1}
\begin{proof}
Consider first the case \(n=0\). Since \(\alpha>-\frac{1}{2}\) it follows from the Stirling formula that there exists a positive constant \(C\) such that for all \(\lambda\in {\R}\setminus\{0\}\) \(\displaystyle \mid \lambda c(\lambda)\mid\leq C(1+\mid \lambda\mid)\).\\
It follows from (\ref{A1}) that there exists \(C>0\) such that for all \(t\geq 1\)
\begin{eqnarray*}
\mid \lambda\phi_\lambda^{\alpha,\beta}(t)\mid &\leq &C(1+\mid\lambda\mid){\rm e}^{-(\alpha+\beta+1)t}\\
 &\leq&2C(1+\lambda^{2}){\rm e}^{-(\alpha+\beta+1)t}.
\end{eqnarray*}
We also have from (\ref{A2}) that there exists \(C>0\) such that for all \(t\leq 1\) 
\begin{eqnarray*}
\mid \lambda\phi_\lambda^{\alpha,\beta}(t)\mid&\leq& C(1+\mid\lambda\mid){\rm e}^{-(\alpha+\beta+1)t}\\
&\leq&2C(1+\lambda^{2}){\rm e}^{-(\alpha+\beta+1)t}.
\end{eqnarray*}
Combining these inequalities, the lemma follows for \(n=0\).\\
We prove the case \(n\geq 1\) by induction with respect to \(n\) using the formula
\[
\dfrac{d^{n}}{dt^{n}}\phi^{\alpha,\beta}_{\lambda}(t)=\dfrac{-\Gamma(\alpha+1)}{4}((\alpha+\beta+1)^{2}+\lambda^{2})\sum_{k=0}^{n-1}2^{k}\dfrac{d^{k}}{dt^{k}}(\sinh(2t))\dfrac{d^{n-1-k}}{dt^{n-1-k}}\phi^{\alpha+1,\beta+1}_{\lambda}(t).
\] 
This last formula follows from the identity obtained by a simple calculation:
\begin{eqnarray*}
\dfrac{d}{dt}\phi^{\alpha,\beta}_{\lambda}(t)&=&\dfrac{-\Gamma(\alpha+1)}{4}((\alpha+\beta+1)^{2}+\lambda^{2})\sinh(2t)\phi^{\alpha+1,\beta+1}_{\lambda}(t).
\end{eqnarray*} 
\end{proof}
\begin{lemma} \label{L}Let \(f\) be a  \({\C}\)-valued function on \({\R}\) whose \(n\)th-order derivative is continuous. Assume  \(f(t_1)=f(t_2)=....=f(t_n)=0\), \(t_1,..,t_n\) being  real numbers such that  \(t_{1}>t_{2}>\ldots> t_{n}\). Then for  any \(t\in {\R}\) we have
\begin{eqnarray}|\dfrac{f(t)}{\prod\limits_{i=1}^{n}(t-t_{i})}| &\leq& \sup_{s\in [\min(t,t_{n}),\max(t,t_{1})]}|f^{(n)}(s)|. \nonumber
\end{eqnarray}
\end{lemma}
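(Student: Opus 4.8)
The plan is to prove the inequality by induction on $n$, organizing the argument around an integral (rather than an iterated Rolle) representation of the divided difference $f[t_1,\dots,t_n,t]=f(t)/\prod_{i=1}^n(t-t_i)$; this is essential because the classical proof via $n$ successive applications of Rolle's theorem is not available for $\mathbb{C}$-valued $f$. Throughout we may assume $t\notin\{t_1,\dots,t_n\}$: if $t=t_j$ for some $j$, the left-hand side is to be read as the finite limit $\lim_{s\to t}f(s)/\prod_i(s-t_i)$, which exists since $f\in C^1$ and $f$ vanishes at that node, and the bound then follows from the generic case by letting $s\to t$.

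For the base case $n=1$, since $f(t_1)=0$ the fundamental theorem of calculus gives $f(t)=f(t)-f(t_1)=(t-t_1)\int_0^1 f'\big(t_1+u(t-t_1)\big)\,du$, so that
\[
\Big|\frac{f(t)}{t-t_1}\Big|\le\int_0^1\big|f'\big(t_1+u(t-t_1)\big)\big|\,du\le\sup_{s\in[\min(t,t_1),\,\max(t,t_1)]}|f'(s)|.
\]
For the inductive step, assume the statement for $n-1$ and let $f\in C^n$ vanish at $t_1>\cdots>t_n$. Put $g(s)=f(s)/(s-t_n)$ for $s\ne t_n$ and $g(t_n)=f'(t_n)$; the identity $g(s)=\int_0^1 f'\big(t_n+u(s-t_n)\big)\,du$ (valid for all $s$) shows $g\in C^{n-1}(\mathbb{R})$, and differentiating under the integral sign gives
\[
g^{(n-1)}(s)=\int_0^1 u^{\,n-1}f^{(n)}\big(t_n+u(s-t_n)\big)\,du,
\]
hence $|g^{(n-1)}(s)|\le\frac1n\sup_{v\in[\min(t_n,s),\,\max(t_n,s)]}|f^{(n)}(v)|$. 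Since $g(t_i)=0$ for $1\le i\le n-1$, the induction hypothesis applied to $g$ with the nodes $t_1>\cdots>t_{n-1}$ yields $\big|g(t)/\prod_{i=1}^{n-1}(t-t_i)\big|\le\sup_{s\in[\min(t,t_{n-1}),\,\max(t,t_1)]}|g^{(n-1)}(s)|$. Now $g(t)/\prod_{i=1}^{n-1}(t-t_i)=f(t)/\prod_{i=1}^{n}(t-t_i)$, and because $t_n\le t_{n-1}\le t_1$ every segment $[\min(t_n,s),\max(t_n,s)]$ with $s$ in $[\min(t,t_{n-1}),\max(t,t_1)]$ lies in $[\min(t,t_n),\max(t,t_1)]$; combining these facts gives the asserted estimate (indeed with the extra factor $1/n$, hence $1/n!$ after iterating, which one may simply discard to match the statement).

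The two points needing care are minor: differentiation under the integral sign is legitimate here because $f^{(j)}$ is continuous for $0\le j\le n$, so each integrand $u\mapsto u^{\,j}f^{(j+1)}(t_n+u(s-t_n))$ together with its $s$-derivatives is jointly continuous on $[0,1]\times\mathbb{R}$; and the bookkeeping of the interval on which the supremum is taken is handled entirely by the ordering $t_n\le t_{n-1}\le\cdots\le t_1$. I do not anticipate any real obstacle in this lemma — the only conceptual subtlety is precisely that one must not base the proof on Rolle's theorem, which is why the inductive step is built on the integral representation of $g$ and its derivatives.
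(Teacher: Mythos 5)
Your proof is correct and takes essentially the same route as the paper: induction on $n$, peeling off a single linear factor by writing the quotient as $\int_0^1 f'(\cdot)\,du$ and differentiating under the integral sign so that the $(n-1)$st derivative of the quotient is bounded by $\sup|f^{(n)}|$ on the appropriate interval. The only differences are cosmetic — you divide out the node $t_n$ where the paper divides out $t_1$, and you additionally record the harmless factor $1/n$, the explicit base case, and the degenerate case $t\in\{t_1,\dots,t_n\}$.
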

\begin{proof}
For \(n=1\) the result is obvious. We prove the case \(n\geq 1\) by induction on \(n\) using the fundamental theorem of calculus.\end{proof}
Recall that by the Cartan decomposition \(G=K\exp\mathfrak{p}\) each \(g\in G\) can be  uniquely written \(\displaystyle g=\pi_0(g)\exp X(g)\) where \(\pi_0(g)\in K\) and \(X(g)\in \mathfrak{p}\).For \(g=\kappa_{1}(g){\rm e}^{A^+(g)})\kappa_{2}(g)\) we know that \(\pi_0(g)=\kappa_{1}(g)\kappa_{2}(g)\). 
\begin{lemma}\label{RE}
Let \(g,h\in SU(r,r+b)\) and \(H_T\in\mathfrak{a}^+\). Then
\begin{align}\label{D1}
\displaystyle\kappa_1(h^{-1}g)\kappa_2(h^{-1}g)=\kappa_1(h^{-1}\kappa_1(g){\rm e}^{A^+(g)})\kappa_2(h^{-1}\kappa_1(g){\rm e}^{A^+(g)})\kappa_2(g).
\end{align}
\begin{align}\label{D2}
\lim_{R\rightarrow +\infty}\tau_l(\kappa_{1}(g{\rm e}^{RH_T})\kappa_{2}(g{\rm e}^{RH_T}))=\tau_l(\kappa(g))
\end{align}
\end{lemma}
\begin{proof} 
The identity (\ref{D1}) is obvious.\\
We write \(g=\kappa(g)e^{H(g)}n(g)\) with respect to the Iwasawa decomposition \(G=KAN\). Then we have 
\[\pi_{0}(g{\rm e}^{RH_{T}})=\kappa(g)\pi_{0}(e^{H(g)}n(g){\rm e}^{RH_T})  \qquad \forall R>0.\]
Therefore it is enough to prove
\[\lim_{R\rightarrow +\infty}\tau_l(\pi_0({\rm e}^{H(g)}n(g){\rm e}^{RH_T}))=1.\]
We first observe that if \(g=\begin{pmatrix}
A&B \\
 C&D\\ 
\end{pmatrix} \), then
\begin{align}\label{D3}
\tau_l(\pi_0(g))=(\frac{\det D}{\mid \det D\mid})^{-l}
\end{align}
We write \(e^{-RH_T}n(g)e^{RH_T}\) in \(r\times r+b\)-block notation as 
\[e^{-RH_T}n(g)e^{RH_T}=\begin{pmatrix}
 A_{1}(R)&B_{1}(R) \\
A_{2}(R)&B_{2}(R)\\
\end{pmatrix}.\]
Then a straightforward computation shows 
\begin{eqnarray*}
\tau_{l}(\pi_{0}(e^{H(g)+RH_T}e^{-RH_T}n(g)e^{RH_T}))=
 \left(\dfrac{det(\begin{pmatrix}\tanh(RT+S)\\
0_{b\times r}
\end{pmatrix}B_{1}(R)+B_{2}(R))}{|det(\begin{pmatrix}\tanh(RT+S)\\
0_{b\times r}
\end{pmatrix}B_{1}(R)+B_{2}(R)|}\right)^{-l}, \quad H(g)=H_S
\end{eqnarray*}
Since \(\lim_{R\rightarrow +\infty}{\rm e}^{-RH_T}n(g){\rm e}^{RH_T}=I_{2r+b}\) we get 
\[\lim_{R\rightarrow +\infty}\tau_{l}(\pi_{0}(e^{H(g)+RH_T})=1,\]
as to be shown.
\end{proof}

\end{document}